\documentclass[11pt,reqno]{amsart}
\usepackage{extarrows}
\usepackage{rotating}
\usepackage{bbm}
\usepackage{mathrsfs}
\usepackage{stmaryrd}
\usepackage{amscd,latexsym,amsthm,amsfonts,amssymb,amsmath,amsxtra}

\usepackage[hyperfootnotes=false, colorlinks, citecolor=RoyalBlue, linkcolor=Bittersweet, urlcolor=link]{hyperref}
\usepackage{color}
\usepackage[usenames,dvipsnames]{xcolor}
\usepackage{mathpazo}
\usepackage[mathcal]{eucal}
\usepackage{graphicx}
\usepackage[all]{xy}

\usepackage{setspace}
\usepackage[new]{old-arrows}

\usepackage{tikz}
\usetikzlibrary{calc, intersections}
\usepackage{tikz-cd}

\newcommand{\BC}{{\mathbb {C}}}

\newcommand{\BW}{{\mathbb {W}}} \newcommand{\BX}{{\mathbb {X}}}
\newcommand{\BY}{{\mathbb {Y}}}

\newcommand{\CA}{{\mathcal {A}}} \newcommand{\CB}{{\mathcal {B}}}
 
 \newcommand{\CF}{{\mathcal {F}}}

\newcommand{\CS}{{\mathcal {S}}}

\newcommand{\diag}{{\mathrm{diag}}}\renewcommand{\d}{{\mathrm{d}}}

\newcommand{\GL}{{\mathrm{GL}}}
\newcommand{\Hom}{{\mathrm{Hom}}}

\newcommand{\id}{{\mathrm{id}}}
\newcommand{\Ind}{{\mathrm{Ind}}} \newcommand{\ind}{{\mathrm{ind}}}
\newcommand{\uind}{{\mathrm{uind}}} \newcommand{\uInd}{{\mathrm{uInd}}}
\newcommand{\Irr}{{\mathrm{Irr}}}

\newcommand{\rank}{{\mathrm{rank}}}

\newcommand{\Rep}{{\mathrm{Rep}}}

\newcommand{\Stab}{{\mathrm{Stab}}}

\newcommand{\Tr}{{\mathrm{Tr}}}

\newcommand{\wt}{\widetilde}

\newcommand{\pair}[1]{\langle {#1} \rangle}

\newcommand{\incl}{\hookrightarrow}

\newcommand{\sk}{\medskip}
\newcommand{\lra}{\longrightarrow}
\newcommand{\ra}{\rightarrow}

\newcommand{\s}{\sk\noindent}

\theoremstyle{plain}
\newtheorem{thm}{Theorem}[section] 
\newtheorem{lem}[thm]{Lemma}  \newtheorem{prop}[thm]{Proposition}

\theoremstyle{definition}
 
\newtheorem{rem}[thm]{Remark}

\numberwithin{equation}{section}

\title{Local theta correspondence and generalized Shalika models}
\author{Chong Zhang}
\thanks{\emph{Key words and phrases}. Theta correspondence, Shalika model, degenerate Whittaker model, linear model}

\begin{document}
\date{}
\maketitle

\begin{abstract}
We study the behavior of generalized Shalika models under the local theta correspondence for type II pairs of equal rank. This generalizes a result of Gan on the relation between Shalika models and linear models.
\end{abstract}

\tableofcontents

\section{Introduction}

\subsection{Overview} Degenerate and generalized Whittaker models for representations of reductive groups over local fields are important in the theory of automorphic representations. They have numerous applications to local character expansions, Fourier coefficients, and L-functions of  automorphic forms.
Gomez and Zhu \cite{gz} studied the behavior of the generalized Whittaker models under the local theta correspondence for reductive dual pairs of type I. In this paper, we study the same problem for type II dual pairs, restricting ourselves to the special case of the so-called generalized Shalika models. A new feature is that we encounter models attached to subgroups with nontrivial reductive parts, unlike the models appearing in \cite{gz}.  
 
Now we introduce some relevant notions more precisely.
Let $F$ be a non-archimedean local field of characteristic 0. Let $n\geq 1$ and $k\geq2$ be two positive integers. Denote by $\GL_{kn}$ the general linear group of rank $kn$ over $F$. We also use 
$\GL_{kn}$ to denote the group $\GL_{kn}(F)$ of $F$-points.  Then $(\GL_{kn},\GL_{kn})$ is a reductive dual pair of type II. Denote by $\Irr(\GL_{kn})$ the set of equivalence classes of irreducible smooth representations of $\GL_{kn}$. For $\pi\in\Irr(\GL_{kn})$, denote by $\Theta(\pi)$ and $\theta(\pi)$ the big and small theta lifts of $\pi$, respectively. It is known that $\theta(\pi)$ is isomorphic to the contragredient 
$\pi^\vee$ of $\pi$. 

Consider the unipotent subgroup
$$N:=\left\{x=\begin{pmatrix}1_n&X_1&*&*&*\\ &1_n&X_2&*&*\\ &&\ddots&\ddots&*\\ &&&1_n&X_{k-1}\\ &&&&1_n \end{pmatrix}\right\}$$ of $\GL_{kn}$, that is, $N$ is the unipotent radical of the standard parabolic subgroup of $\GL_{kn}$ attached to the partition $(n,\cdots,n)$ of $kn$. Fix a nontrivial additive character $\psi$ of $F$. We also denote by $\psi$ the character of $N$ given by
$$\psi(x):=\prod_{i=1}^{k-1}\psi(\Tr(X_i)),$$ where $\Tr$ is the trace map. The stabilizer (under the conjugation action) of the pair $(N,\psi)$ in $\GL_{kn}$  is the subgroup
$$\GL_n^{\Delta}:=\left\{\diag(g,\cdots,g)\mid g\in\GL_n\right\}.$$

For $\pi\in\Irr(\GL_{kn})$ and $\sigma\in\Irr(\GL_n)$, the space
$$\Hom_{\GL_n^{\Delta}\cdot N}\left(\pi,\sigma\otimes\psi\right)$$ is called the space of {\em generalized Shalika models}. If $k=2$ (resp. $k=3$) and $\sigma$ is the trivial representation (or, more generally, a character), these are known as the Shalika models (resp. Ginzburg--Rallis models) in the literature. Forgetting the 
$\GL_n^{\Delta}$-action,  the space
$$\Hom_{N}\left(\pi,\psi\right)$$ is a special case of degenerate Whittaker models, called the $(k,n)$ models in \cite{cfgk} (or simply the Shalika models in \cite{lm}). When $k=2$, Gan \cite[Theorem 3.1]{gan19} showed that
\begin{equation}\label{equ gan}\Hom_{\GL_n^{\Delta}\cdot N}\left(\Theta(\pi),\sigma\otimes\psi\right)\cong\Hom_{\GL_n\times\GL_n}(\pi^\vee,\sigma\otimes\BC),\end{equation} where $\GL_n\times\GL_n=\left\{\diag(g_1,g_2)\mid g_1,g_2\in\GL_n\right\}$. The latter space in \eqref{equ gan} is the space of (Friedberg--Jacquet) linear models.
The goal of this paper is to generalize the isomorphism \eqref{equ gan}  to arbitrary $k$.

\subsection{Main results}
Let us introduce another subgroup 
$$H:=\left\{x=\begin{pmatrix}1_n&X_1&*&*&*&*\\ &1_n&X_2&*&*&*\\ &&\ddots&\ddots&*&*\\ &&&1_n&X_{k-2}&*\\ &&&&1_n&0\\ &&&&& g\end{pmatrix}\mid g\in\GL_n\right\}$$ of $\GL_{kn}$. Denote by $|\det|^{(2-k)n/2}\otimes\psi$ the character of $H$ given by
$$\left(|\det|^{(2-k)n/2}\otimes\psi\right)(x):=|\det(g)|^{(2-k)n/2}\cdot\prod_{i=1}^{k-2}\psi(\Tr(X_i)),$$ where $\det$ is the determinant map. The subgroup$$\GL_n^{\Delta'}:=\left\{\diag(g,\cdots,g,1_n)\mid g\in\GL_n\right\}\subset\GL_{kn},$$ stabilizes the pair $(H,|\det|^{(2-k)n/2}\otimes\psi)$. 
Our first result is the following.

\begin{thm}[Theorem \ref{thm intro1-new}]\label{thm intro1}
For $\pi\in\Irr(\GL_{kn})$ and $\sigma\in\Irr(\GL_n)$, we have
$$\begin{aligned}\Hom_{\GL_n^{\Delta}\cdot N}\left(\Theta(\pi),\sigma\otimes\psi\right) \cong
\Hom_{\GL_n^{\Delta'}\cdot H}\left(\pi^\vee,
\left(|\det|^{(k-2)n/2}\otimes\sigma\right)\otimes\left(|\det|^{(2-k)n/2}\otimes\psi\right)\right).\end{aligned}$$
\end{thm}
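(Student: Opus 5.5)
The plan is to realize both sides as Hom spaces out of the Weil representation $\omega$ of $\GL_{kn}\times\GL_{kn}$, and then compute a twisted Jacquet module of $\omega$ explicitly. I take the standard model $\omega$ on $\CS(M_{kn}(F))$, with $(g,h)\cdot\phi(X)=|\det g|^{-kn/2}|\det h|^{kn/2}\phi(g^{-1}Xh)$, normalized as in the paper. The defining property of $\Theta(\pi)$ is that $\pi\otimes\Theta(\pi)$ is the maximal $\pi$-isotypic quotient of $\omega$ (after the appropriate contragredient convention). From this property, for any closed subgroup $R$ of the second $\GL_{kn}$ and any smooth representation $\tau$ of $R$, one gets
$$\Hom_{\GL_{kn}\times R}(\omega,\pi\otimes\tau)\cong\Hom_R(\Theta(\pi),\tau).$$
I will apply this with $R=\GL_n^{\Delta}\cdot N$ and $\tau=\sigma\otimes\psi$. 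This reduces the left side of Theorem \ref{thm intro1} to
$$\Hom_{\GL_{kn}\times\GL_n^{\Delta}}\bigl(\omega_{N,\psi},\pi\otimes\sigma\bigr),$$
where $\omega_{N,\psi}$ denotes the $(N,\psi)$-twisted coinvariants.

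The core step is to identify $\omega_{N,\psi}$ as a compactly induced representation. Write $X=(x_1,\dots,x_k)$ in block columns with $x_i\in M_{kn\times n}$. Then $u\in N$ acts by $x_{j}\mapsto x_j+\sum_{i<j}x_i\,(\ast)$, and in particular $x_{i+1}\mapsto x_{i+1}+x_iX_i$.
\begin{enumerate}
\item I take a partial Fourier transform in the last block column $x_k$, turning it into a variable $y\in M_{n\times kn}$. Under this transform, translation by $x_{k-1}X_{k-1}$ becomes multiplication by $\psi(\Tr(yx_{k-1}X_{k-1}))$.
\item Comparing with $\psi(\Tr X_{k-1})$, the $(N,\psi)$-coinvariants of a Schwartz space are computed (Bernstein--Zelevinsky) by restricting to the locus where these characters agree. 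This forces $yx_{k-1}=1_n$ together with analogous conditions from the other $X_i$ on the remaining columns.
\item After passing to coinvariants, the surviving locus should be a single orbit of $\GL_{kn}\times\GL_n^{\Delta}$: the open one, where $(x_1,\dots,x_{k-1})$ has full rank and $y$ is compatible with $x_{k-1}$.
\item The stabilizer of a base point of this orbit is, in the first factor, exactly $H$ twisted by $\GL_n^{\Delta'}$, embedded diagonally with $\GL_n^{\Delta}$. The block $g$ in the lower-right corner of $H$ comes from the freedom in $y$ complementary to $x_{k-1}$.
\end{enumerate}
I expect the outcome to be
$$\omega_{N,\psi}\cong \operatorname{ind}_{\GL_n^{\Delta'}H}^{\GL_{kn}\times\GL_n^{\Delta}}\Bigl(|\det|^{(2-k)n/2}\otimes\psi\Bigr),$$
with $\GL_n^{\Delta'}$ mapped diagonally into both factors. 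The exponent $(2-k)n/2$ should come from combining the normalizing characters $|\det|^{\pm kn/2}$ with the Jacobian of the Fourier transform and the modulus characters.

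Granting this, Frobenius reciprocity for compact induction gives the rest. We have $\Hom(\operatorname{ind}\chi,\pi\otimes\sigma)\cong\Hom_{\text{stab}}(\chi\otimes\delta,\pi\otimes\sigma)$ for the appropriate modulus factor $\delta$. Moving $\pi$ to the other side turns this into a Hom out of $\pi^\vee$ (this uses admissibility of $\pi$ and the standard duality for a closed subgroup). It also converts the diagonal $\GL_n^{\Delta'}$-action into the twist $|\det|^{(k-2)n/2}\otimes\sigma$ on the target. Tracking the characters then yields exactly the right side of Theorem \ref{thm intro1}.

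The main obstacle is the orbit analysis in the second step. One must show that the strata where $(x_1,\dots,x_{k-1})$ drops rank, or where $yx_{k-1}\neq 1_n$ cannot be achieved, contribute nothing to $\omega_{N,\psi}$. My approach is to filter $\CS$ by the closed $\GL_{kn}\times\GL_n^{\Delta}N$-stable rank strata and use exactness of the twisted Jacquet functor. On each non-open stratum I would exhibit a unipotent element of the stabilizer on which $\psi$ is nontrivial but which acts trivially, so that the $(N,\psi)$-coinvariants vanish. This has to be done inductively in $k$, peeling off one block column at a time; the case $k=2$ is Gan's computation \eqref{equ gan}. A secondary, bookkeeping difficulty is getting the modulus characters exactly right to produce $|\det|^{(k-2)n/2}$.
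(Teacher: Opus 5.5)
Your outer reduction is the paper's: you use the defining property of $\Theta(\pi)$ to pass to the $(N,\psi)$-twisted Jacquet module of $\omega$. The core computation of that module, however, does not work as you describe it.

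\textbf{The locus step.} After a partial Fourier transform in $x_k$ alone, only the abelian normal subgroup of $N$ formed by the last block column acts by multiplication operators. Restricting to where these agree with $\psi$ gives exactly $yx_i=0$ for $i<k-1$ and $yx_{k-1}=1_n$, and nothing more. The remaining $X_1,\dots,X_{k-2}$ still act by translations $x_{i+1}\mapsto x_{i+1}+x_iX_i$. Since $N$ is non-abelian, no single mixed model turns all of $N$ into characters, so there are no ``analogous conditions'' to restrict to.

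For the same reason, your steps 3--4 cannot produce $\psi$ on the unipotent radical of $H$. On $\CS$ of one orbit with a geometric action, a point stabilizer acts only by determinant twists. Moreover, the $\GL_{kn}$-stabilizer of a point of your locus has a much smaller unipotent part than $H$.

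\textbf{The missing idea.} The paper's device is as follows. The condition $ST=B_1$ (your $yx_{k-1}=1_n$) forces $T$ (your $y$) to have full rank. Hence the locus $\CA_1$ is the saturation of the \emph{linear} slice $\CB_1=\{(T_1,S_1+X)\mid X\in\BX_{k-1}\}$. Lemma~\ref{lem ind} then writes $\CS(\CA_1)$ as $\uind$ from the slice stabilizer of a \emph{smaller Weil representation} $\omega_{(k-1),(k-1)}$. The rest of $N$ acts on this fibre, so one Fourier transforms again inside it and inducts.

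The group $U_r$ in the slice stabilizer acts on the fibre by translations $X\mapsto X+S_rC$. After the next partial Fourier transform these become multiplication by $\psi^{-1}(\Tr(CTS_r))$, i.e.\ $\psi_r^{-1}$ on the next slice. This is where $\psi_{(k-2)}$ on $U_{(k-2)}\subset H$ comes from. Because every locus $ST=B_r$ forces full rank, degenerate strata never occur, and your ``main obstacle'' disappears.

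\textbf{Your stratified route.} It can be completed instead, but it needs more than you wrote. Let $N'$ be the block-unipotent part of $N$ on the first $k-1$ blocks.
\begin{enumerate}
\item For non-injective $X'=(x_1,\dots,x_{k-1})$ in the locus, take $j$ minimal with $(x_1,\dots,x_j)$ non-injective; on the locus, $j\le k-2$ automatically. Elements of $N'$ supported in block column $j+1$ with columns in $\ker(x_1,\dots,x_j)$ stabilize $X'$ and have arbitrary $\Tr u_{j,j+1}$.
\item $\CS(Z)_{(N',\psi)}=0$ exactly when $Z$ carries no nonzero $(N',\psi)$-equivariant distribution. The Bernstein--Zelevinsky orbit criterion then gives the vanishing on these strata.
\item $\GL_n^{\Delta'}H$ must be computed as the stabilizer of a point of the injective locus \emph{modulo $N'$}. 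There, the $N'$-component of a stabilizing element produces $\psi$.
\end{enumerate}

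\textbf{Frobenius reciprocity.} This step is also misstated. For a closed, non-open $S$, compact induction is not left adjoint to restriction. The space $\Hom_S(\chi\delta,\pi\otimes\sigma)$ consists of $(S,\chi\delta)$-eigenvectors in $\pi\otimes\sigma$. It only injects into $\Hom_S(\pi^\vee\otimes\sigma^\vee,\chi^{-1}\delta^{-1})$ and is in general much smaller. So both of your arrows are wrong, even though their composite names the right space.

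Use Lemma~\ref{lem frob} directly: $\Hom_G(\ind_S^G\rho,\Pi^\vee)\cong\Hom_S(\Pi,\delta_S^{1/2}\otimes\rho^\vee)$ with $\Pi=\sigma^\vee\otimes\pi^\vee$, where admissibility gives $\Pi^\vee\cong\sigma\otimes\pi$. In normalized induction the inducing character is just $\BC\otimes\psi_{(k-2)}^{-1}$, in the paper's conventions. The twists $|\det|^{\pm(k-2)n/2}$ in the theorem are then precisely $\delta^{1/2}_{\GL_n^{\Delta'}\cdot H}$.
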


\begin{rem}
\begin{enumerate}
\item Compared with the isomorphism \eqref{equ gan} in the case $k=2$, the characters $|\det|^{\pm(k-2)n/2}$ appearing in Theorem \ref{thm intro1} may look unnatural at first sight. They arise in the proof of Theorem \ref{thm intro1} from the explicit action of the Weil representation,  and eventually coincide with half of the modulus character of the subgroup $\GL_n^{\Delta'}\cdot H$.  
\item From the proof of Theorem \ref{thm intro1}, we can also show that
\begin{equation}\label{equ intro}\Hom_{ N}\left(\Theta(\pi),\psi\right)\cong
\Hom_{H}\left(\pi^\vee,
|\det|^{(2-k)n/2}\otimes\psi\right).\end{equation}
\item The subgroups $\GL_n^{\Delta}$ and $\GL_n^{\Delta'}$ act naturally on the left- and right-hand sides of  \eqref{equ intro} respectively. Identifying $\GL_n^{\Delta}$ and $\GL_n^{\Delta'}$ with $\GL_n$, we have
\begin{equation}\label{rem intro2}\Hom_{ N}\left(\Theta(\pi),\psi\right)\cong
\Hom_{H}\left(\pi^\vee,
|\det|^{(2-k)n/2}\otimes\psi\right)\otimes|\det|^{(k-2)n/2} \end{equation}
as representations of $\GL_n$.
\end{enumerate}
\end{rem}

Now assume that $\Theta(\pi)=\theta(\pi)$, or equivalently, $\Theta(\pi)\cong\pi^\vee$. For example, this assumption holds if
$\pi$ is generic. Then $\Theta(\pi^\vee)\cong\pi$, and Theorem \ref{thm intro1} (in the form of \eqref{equ intro}) says that $$\Hom_{N}\left(\pi,\psi\right)\cong
\Hom_{H}\left(\pi,
|\det|^{(2-k)n/2}\otimes\psi\right).$$ If we proceed to study  $$\Hom_{H}\left(\pi,
|\det|^{(2-k)n/2}\otimes\psi\right)=\Hom_{H}\left(\Theta(\pi^\vee),
|\det|^{(2-k)n/2}\otimes\psi\right),$$ we can transfer it to a model attached to a subgroup that is more ``reductive" than $H$. Repeating this process, we eventually arrive at the following subgroup
$$P:=\left\{x=\begin{pmatrix}1_n&0&0&0&0\\ &g_1&*&*&*\\ &&g_2&*&*\\ &&&\ddots&*\\ &&&&g_{k-1} \end{pmatrix}\mid g_i\in\GL_n,\ \forall1\leq i\leq k-1\right\}$$ of $\GL_{kn}$, which can be viewed as the parabolic subgroup of $\GL_{(k-1)n}$ associated to the partition $(n,\cdots,n)$ of $(k-1)n$. Our second result will involve the following characters. Let $\mu$ be the character of $P$ given by
$$\mu(x):=\prod_{i=1}^{k-1}|\det(g_i)|^{\alpha_i},$$ where
\begin{equation}\label{equ char1 intro}\begin{aligned}\left(\alpha_1,\alpha_2,...,\alpha_{k-2},\alpha_{k-1}\right):=\frac{n}{2}\left((k-2)-\sum_{i=1}^{k-2}i,(k-4)-\sum_{i=1}^{k-3}i,\cdots,(4-k)-1,(2-k)\right).\end{aligned}\end{equation}
Let $\chi:=|\det|^\beta$, a character of $\GL_n$, where
\begin{equation}\label{equ char2 intro}\beta:=\frac{1}{2}\sum_{i=1}^{k-1}i(k-1-i)n.\end{equation}
We denote by $\GL_n^{(1)}$ the subgroup $\{\diag(g,1_n,\cdots,1_n)\mid g\in\GL_n\}$ of $\GL_{kn}$.

\begin{thm}\label{thm intro2}
Assume that $\Theta(\pi)=\theta(\pi)$. For $\pi\in\Irr(\GL_{kn})$ and $\sigma\in\Irr(\GL_n)$, we have
$$\Hom_{\GL_n^{\Delta}\cdot N}\left(\pi,\sigma\otimes\psi_{(k-1)}\right)\cong\Hom_{\GL_n^{(1)}\cdot P}\left(\pi,(\chi\otimes\sigma)\otimes\mu\right).$$
\end{thm}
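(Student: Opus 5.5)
Theorem \ref{thm intro2} is obtained by iterating Theorem \ref{thm intro1}, or rather a more flexible version of it, $k-2$ times. The mechanism is the one described after the Remark.

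Since $\Theta(\pi)=\theta(\pi)\cong\pi^\vee$, applying Theorem \ref{thm intro1} to $\pi^\vee$ gives
\[
\Hom_{\GL_n^{\Delta}\cdot N}(\pi,\sigma\otimes\psi)\cong\Hom_{\GL_n^{\Delta'}\cdot H}\bigl(\pi,(|\det|^{(k-2)n/2}\otimes\sigma)\otimes(|\det|^{(2-k)n/2}\otimes\psi)\bigr).
\]
The right-hand side is again a generalized Shalika-type model, now attached to a block unipotent subgroup with $k-1$ nontrivial diagonal blocks. The last block has become a reductive factor $g$, acted on by a character.

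To continue, I would first prove a relative version of Theorem \ref{thm intro1} by the same argument. Fix $1\le j\le k-1$. Let $H_j$ be the subgroup whose first $k-j$ diagonal blocks carry the $\psi$-type unipotent structure with $\GL_n$ acting diagonally on them. On the last $j-1$ blocks $H_j$ contains the full parabolic $\{g_1,\dots,g_{j-1}\}$, with a prescribed character of $|\det|$-powers. Then $H_1=N$, $H_2$ is essentially $H$, and $H_{k-1}\cdot\GL_n^{(1)}=\GL_n^{(1)}\cdot P$ after conjugating by a block permutation. The relative statement says that $\Hom$ from $\Theta(\pi)$ into the $H_j$-model equals $\Hom$ from $\pi^\vee$ into the $H_{j+1}$-model, with an explicit shift of characters. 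The proof copies that of Theorem \ref{thm intro1}. Realize the Weil representation on $\CS(M_{kn}(F))$, or in a mixed model. Take the Fourier transform in the column block that is being converted. Use the $N$-equivariance to localize the support of the relevant functionals onto an orbit. The stabilizer of that orbit converts one $\psi(\Tr X)$ condition into one more $\GL_n$ Levi factor. The blocks already converted to reductive factors are carried along passively, and contribute only through the action of the Weil representation, which is where the $|\det|$ twists arise.

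Applying this relative statement alternately to $\pi$ and $\pi^\vee$, always using $\Theta=\theta$ and hence $\Theta(\pi^\vee)\cong\pi$, reduces the number of $\psi$-conditions by one at each step. After $k-2$ steps only the first block carries the $\GL_n$-action, with twisted $\sigma$, and the rest is the parabolic $P$. The main obstacle I expect is bookkeeping the exponents. At step $j$ the Weil representation contributes $|\det g|^{\pm(\text{size})n/2}$ on each newly created Levi block, and $\GL_n^{\Delta}$ picks up a factor $|\det|^{(k-j-1)n/2}$ from the remaining $\psi$-blocks. Summing these should give $\beta=\tfrac12\sum_i i(k-1-i)n$ on $\sigma$. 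The last Levi factor $g_{k-1}$ receives only the first shift $(2-k)n/2$. The earlier factors $g_i$ receive a term $(k-2i)n/2$ minus a triangular sum $\tfrac{n}{2}\sum_{m\le k-1-i}m$ coming from the later steps. This reproduces $(\alpha_1,\dots,\alpha_{k-1})$ in \eqref{equ char1 intro}.

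The exponents at each step would be checked as follows. I would identify the modulus character of the stabilizer at that step and verify that the twist equals half of it, as in the Remark. The notation $\psi_{(k-1)}$ would be taken to mean $\psi$ on the $k-1$ superdiagonal blocks, consistent with the first step. Finally, at each step I would use the irreducibility of $\pi$ together with $\theta(\pi)=\Theta(\pi)$ to ensure that no extra constituents of the big theta lift interfere.
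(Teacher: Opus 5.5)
\textbf{There is a genuine gap in your relative step.} Your first step is the paper's: Theorem \ref{thm intro1-new} applied to $\pi^\vee$, using $\Theta(\pi^\vee)\cong\pi$. Your overall iteration is also the one in Theorem \ref{thm2}. The problem is the claim that the blocks already converted into Levi factors ``are carried along passively'', so that the proof copies that of Theorem \ref{thm intro1}.

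At stage $r\ge 1$ the model is attached to $\GL(V)^{\Delta^{(k-r)}}\cdot P(V_{(r)})N_{(k-r-1)}$. The $(N_{(k-r-1)},\psi_{(k-r-1)})$-twisted Jacquet module does localize on a single orbit (Proposition \ref{key prop1}, Remark \ref{rem key prop1}). However, the orbit stabilizer only produces the full group $\GL(W_{(r+1)})$. What remains is $\ind\bigl(\omega_{(r+1),(r)}\otimes\psi_{(k-r-2)}^{-1}\bigr)$, where $\omega_{(r+1),(r)}$ is the Weil representation of the \emph{unequal-rank} pair $\GL(W_{(r+1)})\times\GL(V_{(r)})$. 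The group $P(V_{(r)})$, Levi factors included, acts on it nontrivially.

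You therefore still have to compute $\bigl(\omega_{(r+1),(r)}\bigr)_{(P(V_{(r)}),\mu_{(r)})}$, and this is not a one-orbit computation. The unipotent radical yields Kudla's filtration, whose pieces $\lambda_0,\dots,\lambda_n$ come from different rank strata. Twisted coinvariants under the Levi $\GL_n$'s are only right exact. Already for $r=1$ this object is the big theta lift $\Theta_{(2),(1)}(\mu_{(1)})$ of a character from $\GL_n$ to $\GL_{2n}$. Its structure is a genuine theorem (Chen), not a consequence of Schr\"odinger-model manipulations.

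\textbf{How the paper fills this.} Proposition \ref{key prop2} states: if $\xi=\otimes_j|\det|^{\alpha_j}$ with all $\alpha_j\le 0$, then $\bigl(\omega_{(r+1),(r)}\bigr)_{(P(V_{(r)}),\delta^{1/2}_{P(V_{(r)})}\xi)}\cong\Ind_{P(W_{(r+1)})}^{\GL(W_{(r+1)})}(\xi^{-1}\otimes 1)$. It is proved by induction on $r$, with Chen's theorem as the base case. In the inductive step, Proposition \ref{prop jacquet mod} and Lemma \ref{lem bernstein} show that every $\lambda_i$ with $i<n$ dies. A surviving $\lambda_i$ would force the exponent on the first block to equal $n-i>0$, so the sign condition is essential, and one must check that $\mu_{(r)}$ has nonpositive exponents.

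This proposition moves the parabolic from the $V$-side to the $W$-side with one extra block, producing $Q_{(r+1)}$ by transitivity of induction. It is also where the exponents come from, through $\nu_{(r)}$ and $\mu_{(r+1)}=\delta^{1/2}_{Q_{(r+1)}}(\nu_{(r)}\otimes 1)$. Consequently, a relative statement for an arbitrary prescribed $|\det|$-power character is not what the argument gives.

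\textbf{Smaller points.} Your bookkeeping is off: the twist acquired by $\sigma$ at step $j$ is $\chi_j=|\det|^{(k-j-1)jn/2}$, not $|\det|^{(k-j-1)n/2}$, and only the former sums to $\beta$. Also, the relative step holds unconditionally for $\Theta(\pi)$. The hypothesis $\Theta=\theta$ is used only to replace $\Theta(\pi^\vee)$ by $\pi$ at every step, not ``alternately'', and no further appeal to irreducibility is needed.
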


\begin{rem}
\begin{enumerate}
\item The expressions \eqref{equ char1 intro} and \eqref{equ char2 intro}  of the characters $\mu$ and $\chi$ seem to be complicated and unmotivated. In fact, they are combinations of certain modulus characters. See Section \ref{subsec pf of thm2}  for details.
\item As in \eqref{equ intro}, forgetting the $\GL_n$-part, we also have the isomorphism (under the assumption of Theorem \ref{thm intro2})
$$\Hom_{N}\left(\pi,\psi\right)\cong\Hom_{P}\left(\pi,\mu\right).$$
\end{enumerate}
\end{rem}

Our proofs of Theorems \ref{thm intro1} and \ref{thm intro2} rely on the structure of certain coinvariants  of the Weil representation. 
For Theorem  \ref{thm intro1}, we study the twisted Jacquet modules of the Weil representation. The Schrödinger models and the transitions between them play a key role. This approach is standard, and we follow \cite{gz} and \cite{gan19}. For Theorem  \ref{thm intro2}, in addition to  the twisted Jacquet modules, the structure of the coinvariants of the Weil representation attached to certain parabolic subgroups is crucial. In this step, we need the results of \cite{min} on the Jacquet modules of the Weil representation, and the results of \cite{chen} on the description of the big theta lifts.

\subsection{Notation and convention}
Throughout the paper, $F$ denotes a non-archimedean local field of characteristic 0. Let $|\textrm{-}|$  be the normalized absolute value on $F$.
We fix a nontrivial additive character $\psi$ of $F$. 

Let $G$ be a reductive group over $F$. All representations considered are smooth complex representations. Denote by $\Rep(G)$ the category of smooth representations of $G$, by $\Irr(G)$ the set of equivalence classes of irreducible smooth representations, and by 1 or $\BC$ the trivial representation. For $\pi\in\Rep(G)$, let $\pi^\vee$ be its contragredient. 

Let $H$ be a closed subgroup of $G$. Denote by $\delta_H$ the modulus character of $H$.  For $\sigma\in \Rep(H)$, denote by $\uInd_H^G(\sigma)$ the (smooth) induced representation, and by 
$\uind_H^G(\sigma)$ the compactly induced representation. We use $\Ind_H^G$ to denote the normalized induction, and use $\ind_H^G$ to denote the normalized compact induction, i.e.,
$$\Ind_H^G(\sigma)=\uInd_H^G(\delta_H^{1/2}\otimes\sigma),\quad\ind_H^G(\sigma)=\uind_H^G(\delta_H^{1/2}\otimes\sigma).$$ 

Let $\chi$ be a character of $H$. For $\pi\in\Rep(G)$, denote by $\pi_{(H,\chi)}$ the space of coinvariants, i.e., the quotient of $\pi$ modulo the subspace generated by $\{\pi(h)v-\chi(h)v\mid h\in H \}$, where $v$ ranges
over the underlying space of $\pi$. In particular, if $\chi$ is the trivial character, we write $\pi_H$ instead of $\pi_{(H,\chi)}$.
When $G$ is a $p$-adic reductive group and $P=MN$ is a parabolic subgroup of $G$, where $N$ is the unipotent radical and $M$ is a Levi subgroup, we denote by $R_P(\pi)$ the normalized Jacquet module of $\pi$, i.e., $R_P(\pi)=\delta_P^{-1/2}\pi_N$. In this case, the parabolic subgroup opposite to $P$ is denoted by $\overline{P}$.

For a locally profinite space $X$, let $\CS(X)$ be the space of locally constant, compactly supported $\BC$-valued functions on $X$.

\subsection{Acknowledgements}
This work was partially
supported by NSFC Grants 12022106 and 11971223.\section{Preliminaries}

\subsection{Local theta correspondence} We briefly recall the Weil representation and local theta correspondence for type II dual pairs.

\subsubsection{Weil representation} Suppose $W$ and $V$ are vector spaces over $F$, with $\dim W=m$ and $\dim V=n$. Then $$\BW:=\Hom(W,V)\oplus\Hom(V,W)$$ is a symplectic space over $F$,  whose symplectic form is given by
$$\pair{T_1+T_1',T_2+T_2'}=\Tr(T_1T_2')-\Tr(T_1'T_2),\quad T_1,T_2\in\Hom(W,V), T'_1,T'_2\in\Hom(V,W).$$  The pair $(\GL(W),\GL(V))$ is called a reductive dual pair of type II, and admits a Weil representation, denoted $\omega$. The Weil representation can be realized on the space $\CS(\BX)$, where $\BX$ is a Lagrangian of $\BW$. For example, if $\BX=\Hom(W,V)$, the action is given by
$$\left(\omega(h,g) \varphi\right)(X)=|\det g|^{-m/2}\cdot|\det h|^{n/2}\cdot\varphi(g^{-1}Xh),$$ where $(h,g)\in\GL(W)\times\GL(V)$ and $\varphi\in\CS(\Hom(W,V))$. If  $\BX=\Hom(V,W)$, then the action is given by
$$\left(\omega(h,g) \varphi\right)(X)=|\det g|^{m/2}\cdot|\det h|^{-n/2}\cdot\varphi(h^{-1}Xg).$$

\subsubsection{Theta lifts}  For $\pi\in\Irr(\GL(W))$,  its \emph{big theta lift}  $\Theta(\pi)\in\Rep(\GL(V))$ is defined as $$\Theta(\pi):=(\omega\otimes \pi^\vee)_{\GL(W)}.$$  In other words, $\pi\otimes\Theta(\pi)$ is the maximal $\pi$-isotypic quotient of $\omega$. The Howe duality for type II dual pairs, a fundamental result proved by M\'inguez \cite{min}, says that if $\Theta(\pi)$ is nonzero then it has a unique irreducible quotient. We call this quotient the \emph{small theta lift} of $\pi$, and denote it by $\theta(\pi)$. Moreover, M\'inguez \cite[Theorem 1]{min} proved that if $m\leq n$ then $\Theta(\pi)$ is nonzero, and $\theta(\pi)$ can be determined explicitly. In particular, if $m=n$, then $$\theta(\pi)\cong\pi^\vee.$$
 However, the structure of $\Theta(\pi)$ is not completely understood. We refer to \cite{chen}  for the most recent advances on this topic.
If $\pi$ is generic, then $\Theta(\pi)=\theta(\pi)$; this was proved independently by Fang--Sun--Xue \cite{fsx} and Gan \cite{gan19}.

\subsection{Settings} In this subsection, we introduce the notation for the groups, subgroups and their characters considered in this paper. The notation differs from that of the introduction, and looks a little complicated.

\subsubsection{Spaces}
Throughout this paper,  $V$ and $W$ are $F$-vector spaces of dimension $n$. We identify $V$ and $W$ with $F^n$, whose elements are written as column vectors. Identify $\GL(V)$ and $\GL(W)$ with $\GL_n$.
Fix a positive integer $k\geq2$. Denote $$V_{(k)}:=V^{\bigoplus k}.$$ For an integer $1\leq r\leq k$, denote $$V_r:=(0,\cdots,0,\underbrace{V}_{r\textrm{-th position}},0,\cdots,0),$$ which is a subspace of $V_{(k)}$, and  set
$$V^{(r)}:=\bigoplus_{i=1}^r V_i=(\underbrace{V,\cdots,V}_{r\ \textrm{times}},0,\cdots,0),$$
$$V_{(r)}:=\bigoplus_{i=k-r+1}^k V_i=(0,\cdots,0,\underbrace{V,\cdots,V}_{r\ \textrm{times}}).$$
Similarly,  we have the space $W_{(k)}$, and its subspaces $W_r$, $W^{(r)}$ and $W_{(r)}$.

\subsubsection{Lagrangians}
For $r\leq k$, denote $$\BW_r:=\Hom\left(W_{(r)},V_{(r)}\right)\oplus\Hom\left(V_{(r)},W_{(r)}\right).$$ 
Then $$\BX_{r}:=\Hom\left(W_{(r)},V_{(r)}\right)$$ 
is a Lagrangian of $\BW_{r}$. We will also use other Lagrangians. For example,
$$\begin{aligned}\BY_r:&=\Hom\left(V_{k-r+1},W_{(r)}\right)\oplus\Hom\left(W_{(r)},V_{(r-1)}\right)\\
&=\Hom\left(V_{k-r+1},W_{(r)}\right)\oplus\Hom\left(W_{k-r+1},V_{(r-1)}\right)\oplus\BX_{r-1}
\end{aligned}$$ is another Lagrangian of $\BW_{r}$.

\subsubsection{Subgroups and characters} 
Let $1\leq r\leq k-1$. For $A\in\Hom\left(V_{(k-r)},V_r\right)$, denote by $\exp(A)$ the unipotent element of $\GL(V_{(k)})$ such that $$\exp(A)|_{V_i}=\id,\ \forall i\leq r;\quad\exp(A)|_{V_{(k-r)}}=\id+A,$$ where $\id$ stands for the identity map.  Denote by $N_r$ the unipotent subgroup 
$$\left\{\exp(A)\mid A\in\Hom\left(V_{(k-r)},V_r\right)\right\}$$ of $\GL(V_{(k)})$. In terms of matrices, 
\begin{equation}\label{equ Nr}N_r=\left\{x=\begin{pmatrix}1_n&0&\cdots&\cdots&\cdots&0\\ &\ddots&0&\cdots&\cdots&0\\ &&1_n&X_r&*&*\\
&&&1_n&0&0\\ &&&&\ddots&0\\ &&&&&1_n \end{pmatrix}\right\}.\end{equation}
Put $$N_{(r)}:=\prod_{i=1}^rN_i.$$ In particular, $N_{(k-1)}$ is the unipotent radical of the standard parabolic subgroup of $\GL(V_{(k)})$ associated to the partition $(n,n,\cdots,n)$.

For each $r$, we fix an element $B_r\in\Hom\left(V_r,V_{(k-r)}\right)$ given by $$B_r(v):=(v,0,\cdots,0)\in V_{(k-r)}=V_{r+1}\oplus\cdots\oplus V_k.$$ Let $\psi_r$ be the character of $N_r$ defined by $$\psi_r(\exp(A)):=\psi(\Tr_{V_r}(AB_r)),\quad A\in\Hom\left(V_{(k-r)},V_r\right).$$
In terms of the expression \eqref{equ Nr}, we see that
$$\psi_r(x)=\psi(\Tr(X_{r})),\quad x\in N_r.$$ Define
$$\psi_{(r)}:=\otimes_{i=1}^r\psi_i,$$ which is a character of $N_{(r)}$.
If we are working with $\GL(W_{(k)})$, we write $U_r$, $U_{(r)}$ and $D_r$ instead of $N_r$, $N_{(r)}$ and $B_r$, respectively, and still use $\psi_r$ or $\psi_{(r)}$ to denote the relevant characters. 

For $1\leq r\leq k-1$, denote 
$$H_{(r)}:=\GL(W_{(r)})U_{(k-r-1)},$$ which is a subgroup of $\GL(W_{(k)})$. When $r=k-1$, $U_{(0)}$ is interpreted as the trivial group. In particular,
$$H_{(1)}=\GL(W_k)U_{(k-2)},\quad H_{(k-1)}=\GL(W_{(k-1)}).$$
Similar notation $G_{(r)}:=\GL(V_{(r)})N_{(k-r-1)}$ applies to the corresponding subgroup of $\GL(V_{(k)})$.

Let $1\leq r\leq k$. For $g\in\GL_n$, we define the following elements of $\GL(V_{(k)})$:
\begin{itemize}
\item $\Delta^{(r)}(g):=\diag(g,\cdots,g)\in\GL(V^{(r)})$,
\item $\Delta_{(r)}(g):=\diag(g,\cdots,g)\in\GL(V_{(r)})$,
\item $\tau_{(r)}(g):=\diag(g,1,\cdots,1)\in\GL(V_{(r)})$.
\end{itemize}
Similar notation applies to elements of $\GL(W_{(k)})$. Define the following subgroups of $\GL(V_{(k)})\times\GL(W_{(k)})$:
\begin{itemize}
\item $\GL(V)^{\Delta^{(r)}}:=\left\{\Delta^{(r)}(g)\mid g\in\GL_n\right\}\subset\GL(V^{(r)})$,
\item $\GL(V)^{\Delta_{(r)}}:=\left\{\Delta_{(r)}(g)\mid g\in\GL_n\right\}\subset\GL(V_{(r)})$,
\item $G^{\Delta_{(r)}}:=\left\{\left(\Delta_{(r)}(g),\tau_{(r)}(g)\right)\mid g\in\GL_n\right\}\subset\GL(V)^{\Delta_{(r)}}\times\GL(W_{(r)})$,
\item  $G^{\Delta^{(r'),(r)}}:=\left\{\left(\Delta^{(r')}(g),\Delta^{(r)}(g)\right)\mid g\in\GL_n\right\}\subset\GL(V)^{\Delta^{(r')}}\times\GL(W)^{\Delta^{(r)}}$, where $1\leq r'\leq k$.
\end{itemize}
By definition, $G^{\Delta^{(k),(1)}}=G^{\Delta_{(k)}}$.
Note that the stabilizer of $\left(N_{(r-1)},\psi_{(r-1)}\right)$ in $\GL(V^{(r)})$ is the subgroup $\GL(V)^{\Delta^{(r)}}$.

\subsection{Schrödinger models} For $1\leq r,r'\leq k$, denote by $$\omega_{(r),(r')}$$ the Weil representation of the dual pair $(\GL(W_{(r)}),\GL(V_{(r')}))$, which can be realized on the Schrödinger model $\CS\left(\Hom\left(W_{(r)},V_{(r')}\right)\right)$. Denote by $$\omega_{(r), r'}$$ the Weil representation of the dual pair $(\GL(W_{(r)}),\GL(V_{r'}))$, which can be realized on the Schrödinger model $\CS\left(\Hom\left(V_{r'},W_{(r)}\right)\right)$.

Now consider the Weil representation $\omega_{(r),(r)}$. Since
$$\BX_{r}=\Hom\left(W_{(r)},V_{k-r+1}\right)\oplus\Hom\left(W_{(r)},V_{(r-1)}\right)$$ and 
$$\BY_{r}=\Hom\left(V_{k-r+1},W_{(r)}\right)\oplus\Hom\left(W_{(r)},V_{(r-1)}\right),
$$  we can change the model $\CS\left(\BX_r\right)$ of $\omega_{(r),(r)}$ to 
$\CS\left(\BY_r\right)$, using the partial Fourier transform
$$\CF:\CS\left(\BX_r\right)\lra \CS\left(\BY_r\right).$$
The map $\CF$ is explicitly given by
\begin{equation}\label{equ fourier}
\CF(\varphi)(T,S):=\int_{\Hom\left(W_{(r)},V_{k-r+1}\right)}\varphi(T^*,S)\psi\left(\Tr_{V_{k-r+1}}(T^*T)\right)\ \d T^*,\end{equation} where $(T,S)\in\BY_r=\Hom\left(V_{k-r+1},W_{(r)}\right)\oplus\Hom\left(W_{(r)},V_{(r-1)}\right)$.


Let us write down the explicit formulas, which follow readily from \eqref{equ fourier}, for the actions of certain subgroups of $\GL(W_{(r)})$ and $\GL(V_{(r)})$ on $\CS\left(\BY_r\right)$. Let $$\phi\otimes\phi'\in \CS\left(\Hom\left(V_{k-r+1},W_{(r)}\right)\right)\otimes\CS\left(\Hom\left(W_{(r)},V_{(r-1)}\right)\right)=\CS\left(\BY_r\right).$$
\begin{itemize}
\item For $h\in\GL(W_{(r)})$,  we have
\begin{equation}\label{equ action1}\begin{aligned}\left(\omega_{(r),(r)}(h)(\phi\otimes\phi')\right)(T,S)=|\det h|^{-n/2}\cdot\phi(h^{-1}T)\cdot|\det h|^{(r-1)n/2}\cdot\phi'(Sh).
\end{aligned}\end{equation} In other words,
$$\omega_{(r),(r)}|_{\GL(W_{(r)})}
\cong\omega_{(r),k-r+1}|_{\GL(W_{(r)})}\otimes \omega_{(r),(r-1)}|_{\GL(W_{(r)})}.$$
\item For $(g_1,g_2)\in\GL(V_{k-r+1})\times\GL(V_{(r-1)})$, we have 
\begin{equation}\label{equ action2}\left(\omega_{(r),(r)}(g_1,g_2)(\phi\otimes\phi')\right)(T,S)=|\det g_1|^{rn/2}\cdot\phi(Tg_1)\cdot |\det g_2|^{-rn/2}\cdot\phi'(g_2^{-1}S).\end{equation}
In other words, $$\omega_{(r),(r)}|_{\GL(V_{k-r+1})\times\GL(V_{(r-1)})}\cong\omega_{(r), k-r+1}|_{\GL(V_{k-r+1})}\otimes\omega_{(r),(r-1)}|_{\GL(V_{(r-1)})}.$$
\item For $A\in\Hom\left(V_{(r-1)},V_{k-r+1}\right)$, we have  \begin{equation}\label{equ action3}\left(\omega_{(r),(r)}(\exp(A))(\phi\otimes \phi')\right)(T,S)=\psi\left(\Tr_{V_{k-r+1}}(AST)\right)\cdot\phi(T)\phi'(S).\end{equation}
\end{itemize}

\subsection{Some isomorphisms}

We will frequently use the following  lemmas.

\begin{lem}\label{lem coinvariant}\cite{mvw}
Let $H$ be a locally profinite group, and $\chi$ a character of $H$. Suppose that $H=H_1H_2$, where $H_1$ and $H_2$ are subgroups of $H$. Assume that $H_1$ normalizes $H_2$. Then, for $\pi\in\Rep(H)$,
$$\pi_{(H,\chi)}\cong \left(\pi_{(H_2,\chi)}\right)_{(H_1,\chi)}.$$
\end{lem}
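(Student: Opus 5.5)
We argue directly with the defining subspaces. For a subgroup $K\subseteq H$, write $\pi(K,\chi)$ for the subspace of $\pi$ spanned by $\{\pi(h)v-\chi(h)v\mid h\in K,\ v\in\pi\}$, so that $\pi_{(K,\chi)}=\pi/\pi(K,\chi)$. The plan has two steps. First we check that $\pi(H_2,\chi)$ is stable under $H_1$, so that $\pi_{(H_2,\chi)}$ is naturally a representation of $H_1$. Then we identify the kernel of the composite quotient map $\pi\to\pi_{(H_2,\chi)}\to(\pi_{(H_2,\chi)})_{(H_1,\chi)}$ with $\pi(H,\chi)$.

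For the first step, fix $h_1\in H_1$, $h_2\in H_2$ and $v\in\pi$. Then
$$\pi(h_1)\bigl(\pi(h_2)v-\chi(h_2)v\bigr)=\pi(h_1h_2h_1^{-1})\pi(h_1)v-\chi(h_1h_2h_1^{-1})\pi(h_1)v .$$
Here we used $\chi(h_1h_2h_1^{-1})=\chi(h_2)$, which holds because $\chi$ is a character. Since $H_1$ normalizes $H_2$, we have $h_1h_2h_1^{-1}\in H_2$, so the right-hand side lies in $\pi(H_2,\chi)$. Hence $H_1$ acts on $\pi_{(H_2,\chi)}$ and the iterated coinvariants make sense.

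For the second step, the kernel of the composite map is $\pi(H_2,\chi)+\pi(H_1,\chi)$, which is obviously contained in $\pi(H,\chi)$. For the reverse inclusion, write an arbitrary $h\in H$ as $h=h_1h_2$ with $h_i\in H_i$. Then
$$\pi(h)v-\chi(h)v=\pi(h_1)\bigl(\pi(h_2)v-\chi(h_2)v\bigr)+\chi(h_2)\bigl(\pi(h_1)v-\chi(h_1)v\bigr).$$
By the first step, the first term lies in $\pi(H_2,\chi)$, and the second term lies in $\pi(H_1,\chi)$. Therefore $\pi(H,\chi)=\pi(H_2,\chi)+\pi(H_1,\chi)$, which gives the claimed isomorphism.

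No step presents a genuine obstacle. The only point requiring care is the $H_1$-stability of $\pi(H_2,\chi)$, and this uses exactly the normalizing hypothesis together with the conjugation invariance of the character $\chi$.
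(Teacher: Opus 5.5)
Your proof is correct and complete. The paper gives no argument for this lemma and only cites \cite{mvw}. Your direct verification supplies what the citation leaves implicit. First, $\pi(H_2,\chi)$ is $H_1$-stable because $\chi$ is conjugation-invariant. Second, the kernel of the composite quotient map is $\pi(H_1,\chi)+\pi(H_2,\chi)=\pi(H,\chi)$.

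One small refinement: the inclusion $\pi(H,\chi)\subseteq\pi(H_1,\chi)+\pi(H_2,\chi)$ does not need your first step. Setting $w=\pi(h_2)v$, one has
\[
\pi(h_1h_2)v-\chi(h_1h_2)v=\bigl(\pi(h_1)w-\chi(h_1)w\bigr)+\chi(h_1)\bigl(\pi(h_2)v-\chi(h_2)v\bigr).
\]
So the normalizing hypothesis is used only to let $H_1$ act on $\pi_{(H_2,\chi)}$, i.e.\ to make the right-hand side of the lemma meaningful. Neither smoothness nor the locally profinite topology plays any role; the statement is purely algebraic.
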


\begin{lem}\label{lem ind}\cite[Lemme 1.3]{min}
Let $G$ be a locally profinite group, and $\CA$ a $G$-space. Suppose $\CB$ is a closed subspace of $\CA$ such that $G\cdot \CB=\CA$. Let $H=\Stab_G(\CB)$. Denote by $\rho$ the natural representation of $G$ on $\CS(\CA)$ and $\rho_H$ the natural representation of $H$ on $\CS(\CB)$. Then
$$\rho\cong\mathrm{uind}_H^G(\rho_H).$$
\end{lem}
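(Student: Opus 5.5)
The plan is to construct the isomorphism explicitly, by restricting translates of a function to $\CB$. I will use the implicit hypotheses in the intended setting (as in \cite{min}): the translates $g\CB$ are pairwise either equal or disjoint, so that $g\CB\cap\CB\neq\emptyset$ forces $g\in H$. I will also use that the resulting map $p:\CA\to G/H$, $g b\mapsto gH$, is well defined, continuous and open. This is what makes $\CA$ the bundle $G\times_H\CB$ over $G/H$.

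First I would define $\Phi:\CS(\CA)\to\{\text{functions } G\to\CS(\CB)\}$ by $\Phi(f)(g):=(\rho(g)f)|_{\CB}$, i.e. $\Phi(f)(g)(b)=f(g^{-1}b)$. One checks directly that $\Phi(f)(hg)=\rho_H(h)\Phi(f)(g)$ for $h\in H$, that $\Phi$ is $G$-equivariant for right translation, and that $\Phi(f)(g)$ lies in $\CS(\CB)$ because $\CB$ is closed. $\Phi(f)$ is locally constant in $g$ because $f$ is fixed by an open compact subgroup.

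Next I would show that $\Phi(f)$ has support compact modulo $H$. If $f$ is supported on a compact set $K$, then $\Phi(f)(g)\neq0$ forces $g^{-1}\CB\cap K\neq\emptyset$, i.e. $g^{-1}H\in p(K)$. The set $p(K)$ is compact since $p$ is continuous, so the support lies in a compact subset of $H\backslash G$. Injectivity of $\Phi$ follows from $G\cdot\CB=\CA$: if $\Phi(f)=0$ then $f$ vanishes on every $g^{-1}\CB$, hence everywhere.

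For surjectivity, given $\Psi\in\uind_H^G(\rho_H)$, I would define $f(a):=\Psi(g)(g^{-1}a)$ for any $g$ with $a\in g\CB$. This is independent of $g$ because two such choices differ by an element of $H$ (disjointness of translates), and $\Psi$ transforms by $\rho_H$. It then remains to see that $f$ is locally constant and compactly supported.

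I expect this topological verification to be the main obstacle. To handle it, I would reduce by linearity to $\Psi$ supported on $H g_0 K_0$ for a small open compact $K_0$ fixing $\Psi$. I would use that $G/H$ is an $l$-space, so that the projection $G\to G/H$ admits continuous local sections and hence $p^{-1}$ of a small open compact neighbourhood $U\subset G/H$ is homeomorphic to $U\times\CB$. Under this local trivialization, $f$ corresponds to a finite sum of products of elements of $\CS(U)$ and of $\CS(\CB)$, hence lies in $\CS(\CA)$. Then $\Phi(f)=\Psi$ by construction, which gives $\rho\cong\uind_H^G(\rho_H)$.
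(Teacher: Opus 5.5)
Your proposal is correct and follows the same route the paper relies on: the paper cites Mínguez for this lemma and, in the proof of Proposition \ref{key prop1'}, spells out exactly your map $f\mapsto\bigl(g\mapsto(\rho(g)f)|_{\CB}\bigr)$, with the inverse obtained by writing a point as a translate of a point of $\CB$. You are also right that the quoted statement needs the extra hypothesis that the translates $g\CB$ are pairwise equal or disjoint; this holds in all of the paper's applications, where each $g\CB$ is a fibre of $(T,S)\mapsto T$.

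There is one small slip in your inverse formula. Since $\Psi$ is only left $H$-equivariant, and two admissible choices $g$ and $gh$ differ on the right, the inverse must be $f(a):=\Psi(g^{-1})(g^{-1}a)$ for $a\in g\CB$, not $\Psi(g)(g^{-1}a)$. With this correction both well-definedness and $\Phi(f)=\Psi$ go through. The paper's displayed inverse has the same inversion.
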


\begin{lem}[Frobenius reciprocity]\label{lem frob}
Let $H$ be a closed subgroup of a locally profinite group $G$. For $\pi\in\Rep(G)$ and $\sigma\in\Rep(H)$, we have
$$\Hom_G\left(\pi,\Ind_H^G(\sigma)\right)\cong\Hom_H\left(\pi,\delta_H^{1/2}\otimes\sigma\right),$$
and
$$\Hom_G\left(\ind_H^G(\sigma),\pi^\vee\right)\cong\Hom_H\left(\pi,\delta_H^{1/2}\otimes\sigma^\vee\right).$$
\end{lem}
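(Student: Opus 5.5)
The plan is to prove the first isomorphism directly via evaluation at the identity, and then derive the second from the first using the duality between compact and full induction.

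\emph{First isomorphism.} Since $\Ind_H^G(\sigma)=\uInd_H^G(\delta_H^{1/2}\otimes\sigma)$ consists of smooth functions $f\colon G\to\sigma$ with $f(hg)=\delta_H^{1/2}(h)\sigma(h)f(g)$, the evaluation map $\mathrm{ev}\colon f\mapsto f(1)$ is $H$-equivariant onto $\delta_H^{1/2}\otimes\sigma$. Composing with $\mathrm{ev}$ gives a map
$$\Hom_G\left(\pi,\Ind_H^G(\sigma)\right)\lra\Hom_H\left(\pi,\delta_H^{1/2}\otimes\sigma\right).$$
For the inverse, send $\lambda$ to $\Lambda(v)(g):=\lambda(\pi(g)v)$. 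I will check three things:
\begin{itemize}
\item $\Lambda(v)$ has the required left $H$-equivariance, which follows from the equivariance of $\lambda$;
\item $\Lambda(v)$ is right-invariant under the open stabilizer of $v$ in $G$, so it is smooth;
\item $\Lambda$ is $G$-equivariant.
\end{itemize}
The identities $\mathrm{ev}\circ\Lambda=\lambda$ and $\Lambda_{\mathrm{ev}\circ T}=T$, the latter because $T(v)(g)=T(\pi(g)v)(1)$, show that the two maps are mutually inverse.

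\emph{Second isomorphism.} The key input is the standard duality
$$\left(\uind_H^G\tau\right)^\vee\cong\uInd_H^G\left(\delta_H^{-1}\delta_G\otimes\tau^\vee\right),$$
stated with the paper's conventions for $\delta_H$. It is realized by the pairing
$$\langle f,f'\rangle=\int_{H\backslash G}\langle f(g),f'(g)\rangle\,\d g,$$
where the integrand is a function with the correct $H$-equivariance against a quasi-invariant measure on $H\backslash G$. Applying this with $\tau=\delta_H^{1/2}\otimes\sigma$ gives, for unimodular $G$ (the case used later, since $G$ is always a general linear group),
$$\left(\ind_H^G\sigma\right)^\vee\cong\uInd_H^G\left(\delta_H^{-1/2}\otimes\sigma^\vee\right),$$
the normalized induction of $\sigma^\vee$ twisted suitably. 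Combining this with the adjunction
$$\Hom_G\left(\ind\sigma,\pi^\vee\right)\cong\Hom_G\left(\pi,(\ind\sigma)^\vee\right)$$
and then the first isomorphism yields $\Hom_H(\pi,\delta_H^{1/2}\otimes\sigma^\vee)$, up to tracking the exponent of $\delta_H$ in the paper's normalization. The adjunction itself is the usual fact that $\Hom(A,B^\vee)\cong\Hom(B,A^\vee)\cong(A\otimes B)_G^*$ for smooth representations.

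\emph{Main obstacle.} The routine-looking but delicate point is the bookkeeping of modulus characters in the duality on $H\backslash G$: choosing the quasi-invariant functional and confirming the exponent $\pm\tfrac12$ that matches the stated normalization of $\Ind$ and $\ind$. I will fix this first by verifying the integration formula $\int_G=\int_{H\backslash G}\int_H$ with the relevant modular factor. I would also note explicitly that $\delta_G$ is trivial (or restricts trivially to $H$) in all later applications.
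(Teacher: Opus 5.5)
The paper states this lemma as standard and gives no proof. Your route is the standard one. The first isomorphism via evaluation at $1$ is fine. For the second, the chain $\Hom_G(\ind_H^G\sigma,\pi^\vee)\cong\Hom_G(\pi,(\ind_H^G\sigma)^\vee)$, then duality between compact and full induction, then the first isomorphism, is the right strategy. Your restriction to unimodular $G$ is also correct and in fact needed: in general the right-hand side picks up an extra factor $\delta_G^{-1}|_H$. Every application in the paper has $G$ a product of general linear groups, so this is harmless there.

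\textbf{The gap.} It lies in the one step that carries the content of the second isomorphism, namely the exponent of $\delta_H$, and as written that exponent comes out wrong.

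\emph{Wrong convention in the duality.} Your formula $(\uind_H^G\tau)^\vee\cong\uInd_H^G(\delta_H^{-1}\delta_G\otimes\tau^\vee)$ is valid for the inverse normalization of the modulus character, not the paper's. The paper's $\delta_H$ satisfies $\int_H F(hh_0)\,d_lh=\delta_H(h_0)\int_H F(h)\,d_lh$. For the groups in the paper this is $|\det\mathrm{Ad}|$ on the Lie algebra of the unipotent part, as its formulas for $\delta_{G_{(r)}}$ and $\delta_{\GL(W)^{\Delta^{(k-1)}}\cdot H_{(1)}}$ show. In this normalization, the functions on $G$ admitting a right $G$-invariant integral over $H\backslash G$ are those with $\phi(hg)=\delta_H(h)\delta_G(h)^{-1}\phi(g)$. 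Hence
$$(\uind_H^G\tau)^\vee\cong\uInd_H^G\left(\delta_H\delta_G^{-1}\otimes\tau^\vee\right).$$
As a sanity check, take a parabolic $P$, where $\ind_P^G=\Ind_P^G$. Your formula would give $(\Ind_P^G\sigma)^\vee\cong\Ind_P^G(\delta_P^{-2}\otimes\sigma^\vee)$ instead of the standard $\Ind_P^G(\sigma^\vee)$.

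\emph{Arithmetic slip.} Even from your own formula, $\tau=\delta_H^{1/2}\otimes\sigma$ gives $\delta_H^{-1}\cdot\delta_H^{-1/2}=\delta_H^{-3/2}$, not $\delta_H^{-1/2}$.

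Carried through faithfully, your chain ends at $\Hom_H(\pi,\delta_H^{-3/2}\otimes\sigma^\vee)$, or at $\delta_H^{-1/2}$ with the slip. So ``up to tracking the exponent'' is exactly where the argument fails, not a cosmetic detail.

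\textbf{The fix.} Use the corrected duality with $G$ unimodular. Then
$$(\ind_H^G\sigma)^\vee\cong\uInd_H^G(\delta_H\cdot\delta_H^{-1/2}\otimes\sigma^\vee)=\Ind_H^G(\sigma^\vee),$$
with no extra twist. Your first isomorphism then gives precisely $\Hom_H(\pi,\delta_H^{1/2}\otimes\sigma^\vee)$.
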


\begin{lem}[Bernstein's Frobenius reciprocity]\label{lem bernstein}
Let $P=MN$ be a parabolic subgroup of a $p$-adic reductive group $G$. For $\pi\in\Rep(G)$ and $\sigma\in\Rep(M)$, we have
$$\Hom_G\left(\Ind_P^G(\sigma),\pi\right)\cong\Hom_M\left(\sigma,R_{\overline{P}}(\pi)\right).$$
\end{lem}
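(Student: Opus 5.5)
This is Bernstein's second adjointness theorem. I would not reprove it from scratch here. The plan is to recall the standard route (Bernstein; see also Bushnell's account) and indicate how the isomorphism is built. Write $P=MN$ and $\overline P=M\overline N$. The aim is to produce a natural transformation
$$\eta_\sigma:\ \sigma\lra R_{\overline P}\bigl(\Ind_P^G(\sigma)\bigr)$$
that serves as the unit of the adjunction. One then shows that the induced map $\Hom_M(\sigma,R_{\overline P}(\pi))\to\Hom_G(\Ind_P^G(\sigma),\pi)$ is bijective. Given $\phi:\sigma\to R_{\overline P}(\pi)$, this map should be the unique $\Phi$ with $R_{\overline P}(\Phi)\circ\eta_\sigma=\phi$.

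\textbf{Step 1 (the unit).} Consider the open Bruhat cell $P\overline N\subset G$. Restricting to it embeds $\uind_{P}^{P\overline N}$-type functions into $\Ind_P^G(\sigma)$ as the subspace of sections supported on $P\overline N$. As an $\overline P$-module this subspace is isomorphic to $\CS(\overline N)\otimes\sigma$, up to modulus twists, by Lemma \ref{lem ind} applied to the $\overline P$-space $P\overline N\cong P\times\overline N$. Taking $\overline N$-coinvariants of $\CS(\overline N)\otimes\sigma$ returns $\sigma$ via integration over $\overline N$. Composing gives
$$\sigma\lra R_{\overline P}\bigl(\text{open-cell piece}\bigr)\lra R_{\overline P}\bigl(\Ind_P^G\sigma\bigr),$$
after normalizing by $\delta_P^{1/2}$ and $\delta_{\overline P}^{-1/2}=\delta_P^{1/2}$. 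Define $\eta_\sigma$ to be this composite. Naturality in $\sigma$ is then formal.

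\textbf{Step 2 (the inverse map).} Given $\phi$, one must construct $\Phi$. By Frobenius reciprocity (Lemma \ref{lem frob}) and the geometric lemma, this reduces to a statement about compact open subgroups $K$ with Iwahori factorization $K=(K\cap\overline N)(K\cap M)(K\cap N)$. One needs that the projection $\pi^K\to R_{\overline P}(\pi)^{K\cap M}$ admits a uniform section on the image of $\pi^K$ under a suitable strictly $\overline P$-contracting central element $a\in Z(M)$. This is Jacquet's lemma combined with Casselman's canonical-pairing techniques. Using this section, one can define $\Phi$ on $K$-fixed vectors of $\Ind_P^G\sigma$ compatibly in $K$.

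\textbf{Main obstacle.} The difficult point is the \emph{stabilization} needed for arbitrary smooth $\pi$, not just admissible ones. One must show that the kernel of $\pi^K\to\pi_{\overline N}$ is killed by a fixed power of the Hecke operator attached to $a$, uniformly in $\pi$. I would try first to invoke Bernstein's stabilization theorem, which rests on Noetherianity of the Hecke algebras $\mathcal H(G,K)$ and the structure of the Bernstein centre. With uniform stabilization, the unit--counit identities reduce to the explicit open-cell computation from Step 1, which finishes the proof.
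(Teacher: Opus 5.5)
The paper gives no proof of this lemma: it is quoted as Bernstein's second adjointness theorem, so deferring to Bernstein (or to Bushnell's Hecke-algebra proof) is exactly what the paper does, and your outline is the standard route, namely the open-cell construction of the unit $\sigma\to R_{\overline P}(\Ind_P^G\sigma)$, Jacquet's lemma, and uniform stabilization from Noetherianity of $\mathcal H(G,K)$. Your Step 2 is only a pointer, though, not an argument: Frobenius reciprocity (Lemma \ref{lem frob}) controls maps out of $\Ind_P^G\sigma$ only when the target is a smooth dual, which via Casselman's pairing gives just the admissible case, so for arbitrary smooth $\pi$ all the content lies in the cited stabilization theorem, as you yourself identify.
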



\section{Proof of Theorem \ref{thm intro1}}

In terms of the notation introduced in Section 2, Theorem \ref{thm intro1} can be rephrased as: 

\begin{thm}\label{thm intro1-new}
For $\pi\in\Irr(\GL(W_{(k)}))$ and $\sigma\in\Irr(\GL_n)$, we have
$$\begin{aligned}&\Hom_{\GL(V)^{\Delta^{(k)}}\cdot N_{(k-1)}}\left(\Theta(\pi),\sigma\otimes\psi_{(k-1)}\right)\\\cong&\Hom_{\GL(W)^{\Delta^{(k-1)}}\cdot H_{(1)}}\left(\pi^\vee,
\left(|\det\nolimits_W|^{(k-2)n/2}\otimes\sigma\right)\otimes\left(|\det\nolimits_{W_k}|^{(2-k)n/2}\otimes\psi_{(k-2)}\right)\right).\end{aligned}$$
\end{thm}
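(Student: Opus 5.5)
We follow the standard strategy of \cite{gz} and \cite{gan19}. Since $\Theta(\pi)=(\omega\otimes\pi^\vee)_{\GL(W_{(k)})}$ with $\omega=\omega_{(k),(k)}$, and coinvariants for commuting groups may be taken in either order (Lemma \ref{lem coinvariant}), we get
$$\Hom_{\GL(V)^{\Delta^{(k)}} N_{(k-1)}}\left(\Theta(\pi),\sigma\otimes\psi_{(k-1)}\right)\cong\Hom_{\GL(W_{(k)})\times \GL(V)^{\Delta^{(k)}}}\left(\omega_{(N_{(k-1)},\psi_{(k-1)})},\pi\otimes\sigma\right).$$
So the whole problem reduces to computing the twisted Jacquet module $\omega_{(N_{(k-1)},\psi_{(k-1)})}$ as a representation of $\GL(W_{(k)})\times\GL(V)^{\Delta^{(k)}}$. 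We will show that it is isomorphic to a compactly induced representation
$$\uind_{G^{\Delta}\cdot H_{(1)}}^{\GL(W_{(k)})\times\GL(V)^{\Delta^{(k)}}}\left(\text{character}\right).$$
Here the inducing subgroup is, roughly, $\{(\Delta^{(k-1)}(g)\cdot h,\Delta^{(k)}(g)) : h\in H_{(1)}\}$, and the character is $\psi_{(k-2)}$ times explicit powers of $|\det|$. Once this is established, the second form of Frobenius reciprocity (Lemma \ref{lem frob}) turns the Hom space into $\Hom_{\GL(W)^{\Delta^{(k-1)}} H_{(1)}}(\pi^\vee,\dots)$. Keeping track of $\delta^{1/2}$ and the Weil representation determinants should then produce the characters $|\det_W|^{(k-2)n/2}$ and $|\det_{W_k}|^{(2-k)n/2}$.

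The Jacquet module will be computed inductively, peeling off one block of $V$ at a time using $N_{(k-1)}=N_1N_2\cdots N_{k-1}$ and Lemma \ref{lem coinvariant}. At the first step we use the model $\CS(\BY_k)$, in which $\omega_{(k),(k)}\cong\omega_{(k),1}\otimes\omega_{(k),(k-1)}$ by \eqref{equ action1}--\eqref{equ action2}, with $N_1$ acting through \eqref{equ action3} by $\psi(\Tr(AST))$ for $T\in\Hom(V_1,W_{(k)})$ and $S\in\Hom(W_{(k)},V_{(k-1)})$. Taking $(N_1,\psi_1)$-coinvariants restricts the functions to the closed set $\{(T,S):ST=B_1\}$. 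The pairs with $T$ injective and $S$ of the required form form a single $\GL(W_{(k)})$-orbit. By Lemma \ref{lem ind}, the result is $\uind$ from the stabilizer of a base point $(T_0,S_0)$, namely $T_0$ the inclusion onto $W_1$ and $S_0$ the projection identifying $W_1$ with $V_2$, tensored with the Schrödinger model on the complementary data. Concretely, this is $\uind_{\GL(W_{(k-1)})\cdot(\text{unipotent})}^{\GL(W_{(k)})}$ of a Weil representation $\omega_{(k-1),(k-1)}$-type factor. The diagonal $\GL(V)$ acts by transporting the base point, which is how the diagonal embedding $G^{\Delta}$ enters.

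We then repeat with $(N_2,\psi_2),\dots,(N_{k-1},\psi_{k-1})$, applied to the smaller Weil representation $\omega_{(k-1),(k-1)}$ using $\BY_{k-1}$, and so on. At each stage one more copy of $W$ is fixed and the unipotent group on the $W$-side grows, yielding $U_{(k-2)}$ with character $\psi_{(k-2)}$. At the last stage $N_{k-1}$ meets $\omega_{(1),(1)}$, which is $\CS(\GL_n)$-like. There the $(N_{k-1},\psi_{k-1})$ condition leaves the $\GL(W_k)$ factor free, producing the $|\det_{W_k}|$ twist rather than a unipotent character.

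The main obstacle is the orbit analysis at each step. We must show that the degenerate pairs $(T,S)$ with $ST=B_r$ but $T$ of lower rank contribute nothing, or that the condition $ST=B_r$ forces full rank. We must also verify that the coinvariants of the closed subset are computed correctly, which needs exactness of twisted Jacquet functors applied to the open/closed filtration. Here $ST=B_r$ with $B_r$ injective forces $T$ injective, so there is a single orbit; this settles the first point. The subtle part is rather the bookkeeping of modulus characters and $|\det|$ factors, together with checking that the stabilizer, the diagonal $\GL(V)$, and the characters assemble exactly into $\GL(W)^{\Delta^{(k-1)}}\cdot H_{(1)}$ with the stated exponents. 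We would verify these exponents against the case $k=2$, where they must reduce to \eqref{equ gan}.
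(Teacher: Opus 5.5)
Your plan is essentially the paper's proof. You reduce to the twisted Jacquet module $(\omega_{(k),(k)})_{(N_{(k-1)},\psi_{(k-1)})}$ and compute it by peeling off $N_1,\dots,N_{k-1}$ in the mixed models $\CS(\BY_{k-r})$, where $ST=B_r$ forces $T$ injective; Lemma~\ref{lem ind} is then applied to the fibre over a base point $T_r$ (an affine space carrying a smaller Weil representation, so only the $T$'s form a single orbit), and you conclude with the second form of Lemma~\ref{lem frob}. The point your sketch glosses over, and the real content of the inductive step, is that $U_r$ acts on the fibre $\omega_{(k-r),(k-r)}$ by translations $X\mapsto X+S_rC$; it becomes the character $\psi_r^{-1}$ only after the next partial Fourier transform to $\CS(\BY_{k-r})$, which needs a slight variant of Lemma~\ref{lem ind} since that action is not geometric, while after the last step ($N_{k-1}$ acts on $\omega_{(2),(2)}$, not $\omega_{(1),(1)}$) the leftover $\CS(\Hom(W_k,V_k))$ is translated simply transitively by $U_{k-1}$, which is why $U_{k-1}$ drops out and $H_{(1)}=\GL(W_k)U_{(k-2)}$.
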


We will prove Theorem \ref{thm intro1-new} at the end of this section. Note that, for $g\in\GL(W)$, $h\in\GL(W_k)=\GL(W_{(1)})$, and $u\in U_{(k-2)}$, we have
$$\delta_{\GL(W)^{\Delta^{(k-1)}}\cdot H_{(1)}}\left(\Delta^{(k-1)}(g)hu\right)=|\det\nolimits_W(g)|^{(k-2)n}\cdot
|\det\nolimits_{W_k}(h)|^{(2-k)n}.$$

\subsection{Twisted Jacquet modules of the Weil representation} Our proof of Theorem \ref{thm intro1-new} relies on Proposition \ref{key prop1} stated below, which describes the structure of the twisted Jacquet module  $\left(\omega_{(k),(k)}\right)_{\left(N_{(r)},\psi_{(r)}\right)}$ as an induced representation. 

Consider the Weil representation 
$\omega_{(k-r),(k-r-1)}$ of $\GL(V_{(k-r-1)})\times\GL(W_{(k-r)})$. Via  the isomorphism
\begin{equation}\label{equ gp-isom1}\begin{tikzcd}[row sep=0.1cm]
	{G^{\Delta^{(k),(r)}}} & {\GL(V)^{\Delta_{(k-r-1)}}} \\
	{\left(\Delta^{(k)}(g),\Delta^{(r)}(g)\right)} & {\Delta_{(k-r-1)}(g)},
	\arrow["\sim", from=1-1, to=1-2]
	\arrow[maps to, from=2-1, to=2-2]
\end{tikzcd}\end{equation}
and the restriction from $\GL(V_{(k-r-1)})$ to $\GL(V)^{\Delta_{(k-r-1)}}$,  we can view $\omega_{(k-r),(k-r-1)}$ as a representation of $G^{\Delta^{(k),(r)}}\times\GL(W_{(k-r)})$.  Combining this with the character $\psi_{(r-1)}$ of $U_{(r-1)}$, we obtain a representation $\omega_{(k-r),(k-r-1)}\otimes\psi_{(r-1)}^{-1}$ of $G^{\Delta^{(k),(r)}}\cdot H_{(k-r)}$.
When $r=k-1$, we interpret  $\omega_{(1),(0)}$ as the trivial representation.

\begin{prop}\label{key prop1}
For $1\leq r\leq k-1$, we have
$$\left(\omega_{(k),(k)}\right)_{\left(N_{(r)},\psi_{(r)}\right)}\cong\ind_{G^{\Delta^{(k),(r)}}\cdot H_{(k-r)}}^{\GL(V)^{\Delta^{(k)}}\times\GL(W_{(k)})}\left(\omega_{(k-r),(k-r-1)}\otimes\psi_{(r-1)}^{-1}\right).$$
 In particular, when $r=k-1$, we have
$$\left(\omega_{(k),(k)}\right)_{\left(N_{(k-1)},\psi_{(k-1)}\right)}\cong\ind_{G^{\Delta^{(k),(k-1)}}\cdot H_{(1)}}^{\GL(V)^{\Delta^{(k)}}\times\GL(W_{(k)})}\left(\BC\otimes\psi_{(k-2)}^{-1}\right).$$ 
\end{prop}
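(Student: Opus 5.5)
Induction on $r$, peeling off one $N_r$ at a time with Lemma \ref{lem coinvariant}. Since $N_{(r)}=N_{(r-1)}N_r$ with $N_{(r-1)}$ normalizing $N_r$, we have $(\omega_{(k),(k)})_{(N_{(r)},\psi_{(r)})}\cong\big((\omega_{(k),(k)})_{(N_{(r-1)},\psi_{(r-1)})}\big)_{(N_r,\psi_r)}$. To keep the induction uniform, it is cleaner to prove the statement for the pair $\big(\GL(W_{(k-r+1)}),\GL(V_{(k-r+1)})\big)$ restricted to the last $k-r+1$ blocks, and then induce. Concretely, the key single step is a computation of
$$\big(\omega_{(m),(m)}\big)_{(N',\psi')},$$
where $m=k-r+1$, $N'\cong\Hom(V_{(m-1)},V_{k-m+1})$ is the unipotent group $\exp(A)$ inside $\GL(V_{(m)})$, and $\psi'(\exp A)=\psi(\Tr(A B))$ with $B$ the embedding of $V_{k-m+1}$ into the first summand of $V_{(m-1)}$.

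For this step I use the mixed model $\CS(\BY_m)$ obtained from the partial Fourier transform \eqref{equ fourier}. By \eqref{equ action3}, $\exp(A)$ acts on $\phi\otimes\phi'$ by multiplication by $\psi(\Tr(AST))$. Hence the $(N',\psi')$-coinvariants are the restriction of functions to the closed subset
$$\CA=\{(T,S)\mid ST=B\}\subset\Hom(V_{k-m+1},W_{(m)})\oplus\Hom(W_{(m)},V_{(m-1)}).$$
This uses the standard fact that twisted coinvariants of $\CS(X)$ under a character depending linearly on a map $X\to\Hom(\cdot,\cdot)$ are $\CS$ of the fibre. The condition $ST=B$ forces $T$ to be injective, and $\GL(W_{(m)})$ acts transitively on the pairs $(T,S)$ up to the complementary data. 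More precisely, I apply Lemma \ref{lem ind} with:
\begin{itemize}
\item $G=G^{\Delta}\times\GL(W_{(m)})$, where the diagonal copy $\GL(V)$ acts on $V_{k-m+1}$ and on the first block of $V_{(m-1)}$ simultaneously, since it must preserve $B$;
\item $\CB=\{T=T_0\}$ with $T_0$ the standard embedding onto $W_{k-m+1}$, so that $S$ restricted to $W_{k-m+1}$ equals $B$ and $S|_{W_{(m-1)}}$ is free.
\end{itemize}
The stabilizer of $\CB$ consists of elements of $\GL(W_{(m)})$ fixing $T_0$ up to the diagonal twist. This is $G^{\Delta}$ acting via $\tau$, times the unipotent $U$-part and $\GL(W_{(m-1)})$, i.e.\ the next $H$-type group. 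On $\CB\cong\Hom(W_{(m-1)},V_{(m-1)})$, the remaining action is $\omega_{(m-1),(m-1)}$ tensored with $\psi^{-1}$ on the $U$-part. The character arises because the $U$-component moves $S|_{W_{k-m+1}}=B$ by a translate, which $\phi'$ sees through the trace pairing. Finally, the unipotent elements of $\GL(V_{(m-1)})$ outside its first block act trivially or get absorbed.

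Then I iterate, using transitivity of compact induction and the fact that taking coinvariants under the remaining $N$'s, which are supported inside the inducing subgroup's $V$-side, commutes with $\ind$. This is legitimate because those subgroups lie in the inducing subgroup and are normalized appropriately; this is precisely where Lemma \ref{lem coinvariant} is used again. After $r$ steps one lands on $\omega_{(k-r),(k-r-1)}$, identified through \eqref{equ gp-isom1}, once we account for the shift of one $V$-block into the diagonal at each step.

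\textbf{Main obstacle.} The hardest part is the bookkeeping of normalizations. Unnormalized $\uind$ from Lemma \ref{lem ind} must be converted into normalized $\ind$, and the determinant factors $|\det h|^{-n/2}$, $|\det h|^{(r-1)n/2}$ from \eqref{equ action1}–\eqref{equ action2}, evaluated on the stabilizer, must cancel exactly against $\delta^{1/2}$ of $G^{\Delta^{(k),(r)}}\cdot H_{(k-r)}$, so that the inducing data is precisely $\omega_{(k-r),(k-r-1)}\otimes\psi_{(r-1)}^{-1}$ with no extra character. I would check this first in the case $r=1$, matching Gan's $k=2$ computation, and then verify that the exponents telescope in the inductive step. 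The case $r=k-1$ follows with $\omega_{(1),(0)}=\BC$.
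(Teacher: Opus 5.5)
\textbf{There is a genuine gap.} Your outline matches the paper's: peel off one $N_r$ at a time with Lemma \ref{lem coinvariant}, pass to the mixed model $\CS(\BY)$, restrict to $\{ST=B\}$, slice at $T=T_0$, and apply Lemma \ref{lem ind}. But the single step you describe is wrong at the point that matters. On the slice $\CB_r=\{(T_r,S_r+X)\mid X\in\Hom(W_{(k-r)},V_{(k-r)})\}$, the group $U_r$ acts geometrically through \eqref{equ action1}. The element $\exp(C)$ fixes $T_r$ and sends $X$ to $X+S_rC$. So on $\CS(\CB_r)\cong\CS(\BX_{k-r})$ it acts by the translations $\varphi(X)\mapsto\varphi(X+S_rC)$ of \eqref{equ U-action}, not by a character. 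In the model $\CS(\BY)$ the $\GL(W)$-action involves no trace pairing, so $\phi'$ ``seeing the translate through the trace pairing'' does not happen. In fact $\omega_{(k-r),(k-r)}\otimes\psi_r^{-1}$ is not even a representation of the stabilizer. For $h\in\GL(W_{(k-r)})$ one has $h\exp(C)h^{-1}=\exp(Ch^{-1})$, and $\psi^{-1}(\Tr(Ch^{-1}D_r))=\psi^{-1}(\Tr(CD_r))$ holds for all $C$ only when $h$ fixes $W_{r+1}$ pointwise.

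The character $\psi_r^{-1}$ only appears one step later, and getting it needs two ingredients your plan lacks.

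First, consider what happens when you take $(N_{r+1},\psi_{r+1})$-coinvariants. The partial Fourier transform in the $\Hom(W_{(k-r)},V_{r+1})$-variable turns translation by $S_rC$ into multiplication by $\psi^{-1}(\Tr(CTS_r))$, see \eqref{equ U-action 2}. Because this factor depends on $T$, the $U_r$-action on $\CS(\CA_{r+1})$ is not geometric, and Lemma \ref{lem ind} must be replaced by the modified isomorphism \eqref{equ isom-pf3}. Only on the slice $T=T_{r+1}$ does it become the character $\psi_r^{-1}$, and only then is it normalized by the stabilizer, which is now the smaller group $(G^{\Delta_{(k-r)}}\times\GL(W_{(k-r-1)}))U_{r+1}U_r$. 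This is why the paper proves the companion Proposition \ref{key prop1'} simultaneously and uses it as the actual induction hypothesis. That proposition induces the full $\omega_{(k-r),(k-r)}$ from $G^{\Delta^{(k),(r)}}\cdot H'_{(k-r)}$, with $U_r$ acting by translation.

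Second, you need to reach $\omega_{(k-r),(k-r-1)}$ with $U_r$ absent from the inducing group, and ``act trivially or get absorbed'' does not justify this. The paper shrinks the slice to $\CB'_r=\{(T_r,S_r+Y)\mid Y\in\Hom(W_{(k-r)},V_{(k-r-1)})\}$. The $U_r$-translations act simply transitively on the $\Hom(W_{(k-r)},V_{r+1})$-component, so the stabilizer of $\CB'_r$ loses the factor $U_r$.

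Without these two ingredients your iteration does not close up; at $r=k-1$ you would not land on $\BC\otimes\psi_{(k-2)}^{-1}$ over $H_{(1)}$. By contrast, the normalization bookkeeping you flag as the main obstacle is routine.
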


We will prove Proposition \ref{key prop1} in the next subsection. In the course of the proof we also need another description of 
$\left(\omega_{(k),(k)}\right)_{\left(N_{(r)},\psi_{(r)}\right)}$, which is Proposition \ref{key prop1'} stated below. Let us introduce more notation. For each $1\leq r\leq k-1$, denote
$$H'_{(r)}:=\GL(W_{(r)})U_{(k-r)}=H_{(r)}U_{k-r}.$$  
We define a linear map
\begin{equation}\label{equ S_r}\begin{tikzcd}[row sep=0.1cm]
	S_{r}:W_{r} & V_{(k-r)}\\
	w & (w,0,\cdots,0).
	\arrow[ from=1-1, to=1-2]
	\arrow[maps to, from=2-1, to=2-2]
\end{tikzcd}\end{equation}
Suppose that we realize the Weil representation $\omega_{(k-r),(k-r)}$ on the model $$\CS\left(\BX_{k-r}\right)=\CS\left(\Hom\left(W_{(k-r)},V_{(k-r)}\right)\right).$$ There is an auxiliary action of  $U_r$ on  $\CS\left(\BX_{k-r}\right)$ given by
\begin{equation}\label{equ U-action}(\exp(C)\varphi)(X):=\varphi(X+S_rC),\end{equation} 
where  $C\in\Hom\left(W_{(k-r)},W_r\right)$.
By abuse of the notation, we denote this action of $U_r$ on $\CS(\BX_{k-r})$ by $\omega_{(k-r),(k-r)}$. 
Via  the isomorphism
\begin{equation}\label{equ gp-isom2}\begin{tikzcd}[row sep=0.1cm]
	{G^{\Delta^{(k),(r)}}} & {\GL(V)^{\Delta_{(k-r)}}} \\
	{\left(\Delta^{(k)}(g),\Delta^{(r)}(g)\right)} & {\Delta_{(k-r)}(g)},
	\arrow["\sim", from=1-1, to=1-2]
	\arrow[maps to, from=2-1, to=2-2]
\end{tikzcd}\end{equation}
and the restriction from $\GL(V_{(k-r)})$ to $\GL(V)^{\Delta_{(k-r)}}$,  we view $\omega_{(k-r),(k-r)}$ as a representation of $G^{\Delta^{(k),(r)}}\times\GL(W_{(k-r)})$. 
Combining this with the character $\psi_{(r-1)}$ of $U_{(r-1)}$, we obtain a representation $\omega_{(k-r),(k-r)}\otimes\psi_{(r-1)}^{-1}$ of $G^{\Delta^{(k),(r)}}\cdot H'_{(k-r)}$.

\begin{prop}\label{key prop1'}
For $1\leq r\leq k-1$, we have
$$\left(\omega_{(k),(k)}\right)_{\left(N_{(r)},\psi_{(r)}\right)}\cong\ind_{G^{\Delta^{(k),(r)}}\cdot H'_{(k-r)}}^{\GL(V)^{\Delta^{(k)}}\times\GL(W_{(k)})}\left(\omega_{(k-r),(k-r)}\otimes\psi_{(r-1)}^{-1}\right).$$
\end{prop}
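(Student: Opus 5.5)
I will prove Proposition \ref{key prop1'} by induction on $r$, computing the twisted coinvariants one layer at a time. Lemma \ref{lem coinvariant} lets me do this step by step: $N_{(r)}=N_{(r-1)}N_r$, and $N_r$ is normalized appropriately, so
$$\left(\omega_{(k),(k)}\right)_{(N_{(r)},\psi_{(r)})}\cong\left(\left(\omega_{(k),(k)}\right)_{(N_{(r-1)},\psi_{(r-1)})}\right)_{(N_r,\psi_r)}.$$
The case $r=0$ is the tautology $\omega_{(k),(k)}$ itself, with $H'_{(k)}=\GL(W_{(k)})$. So it suffices to pass from the description at level $r-1$ to level $r$. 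Inductively I will assume
$$\left(\omega_{(k),(k)}\right)_{(N_{(r-1)},\psi_{(r-1)})}\cong\ind_{G^{\Delta^{(k),(r-1)}}H'_{(k-r+1)}}^{\GL(V)^{\Delta^{(k)}}\times\GL(W_{(k)})}\left(\omega_{(k-r+1),(k-r+1)}\otimes\psi_{(r-2)}^{-1}\right).$$
Strictly, at intermediate stages the group acting is $\GL(V)^{\Delta^{(r)}}\times\GL(V_{(k-r+1)})$ rather than $\GL(V)^{\Delta^{(k)}}$. I will carry the larger stabilizer through the induction and restrict to the diagonal only at the end.

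For the single step, I realize $\omega_{(k-r+1),(k-r+1)}$ on the mixed model $\CS(\BY_{k-r+1})=\CS(\Hom(V_r,W_{(k-r+1)}))\otimes\CS(\Hom(W_{(k-r+1)},V_{(k-r)}))$, using the partial Fourier transform \eqref{equ fourier}. The group $N_r$ is realized by $\exp(A)$ with $A\in\Hom(V_{(k-r)},V_r)$. By \eqref{equ action3} it acts by the character $\psi(\Tr(AST))$, so the $(N_r,\psi_r)$-coinvariants are the restriction of functions to the closed set
$$\mathcal{Z}:=\{(T,S): ST=B_r\}.$$
This uses the standard fact that coinvariants of $\CS(X)$ under a unipotent group acting by characters are functions on the locus where the characters agree. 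Because $B_r$ is injective, $T\in\Hom(V_r,W_{(k-r+1)})$ must be injective and $S$ must map $T(V_r)$ isomorphically onto $V_{k-r+1}$.

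The group $\GL(W_{(k-r+1)})\times\GL(V_r)\times\GL(V_{(k-r)})$ (the relevant Levi part) acts transitively on the pairs $T$. I choose the base point $T_0$ identifying $V_r$ with $W_{k-r+1}$, and then $S=B_r T_0^{-1}$ on $W_{k-r+1}$, with $S$ free on $W_{(k-r)}$. The residual function space on $S|_{W_{(k-r)}}\in\Hom(W_{(k-r)},V_{(k-r)})$ is $\CS(\BX_{k-r})$. The remaining freedom in $S|_{W_{k-r+1}}$ is absorbed by translating by $U_r$: an element $\exp(C)$ shifts $S$ by $S_rC$ in the $W_{(k-r)}$ block. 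This accounts for the auxiliary action \eqref{equ U-action}.

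Applying Lemma \ref{lem ind} to $\mathcal{Z}$ with the closed subspace $\{T=T_0\}$, the stabilizer of $T_0$ (together with its compatible $V$-side) is generated by:
\begin{itemize}
\item the elements $(\Delta(g),g)$ linking $\GL(V_r)$ and $\GL(W_{k-r+1})$, which enlarges $G^{\Delta^{(k),(r-1)}}$ to $G^{\Delta^{(k),(r)}}$ via \eqref{equ gp-isom2};
\item $\GL(W_{(k-r)})$;
\item $U_r$, which acts as above;
\item $U_{r-1}$, which now acts by the character $\psi_{r-1}^{-1}$ coming from the $S$-translation.
\end{itemize}
Thus the stabilizer is $G^{\Delta^{(k),(r)}}H'_{(k-r)}$, acting on $\CS(\BX_{k-r})$ by $\omega_{(k-r),(k-r)}\otimes\psi_{(r-1)}^{-1}$. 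Transitivity of induction then gives the claimed formula.

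The main obstacle is bookkeeping of the modulus and determinant factors, so that the unnormalized induction of Lemma \ref{lem ind} matches the normalized $\ind$ in the statement. Specifically:
\begin{itemize}
\item the factors $|\det|^{\pm rn/2}$ and $|\det|^{-n/2}$ from \eqref{equ action1} and \eqref{equ action2} must combine with $\delta^{1/2}$ of the stabilizer;
\item the Jacobian of restricting to $\mathcal{Z}$ (a coinvariant map that is integration along the fibre) must be accounted for;
\item the sign in $\psi_{r-1}^{-1}$ must come out correctly.
\end{itemize}
I would verify these by computing both sides on the torus $\diag$ elements directly, checking that the exponents agree. The verification that the twisted coinvariants equal restriction to $\mathcal{Z}$ also needs care, since the character depends linearly on $(S,T)$ through a bilinear expression. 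I would handle it by first taking coinvariants in the $T$-fibrewise Fourier picture, as in \cite{gz} and \cite{gan19}.
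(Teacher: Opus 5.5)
Your proposal is essentially the paper's argument. It uses the same induction via Lemma~\ref{lem coinvariant}, the mixed model $\CS(\BY_{k-r+1})$, restriction to $\{ST=B_r\}$, a base point $T_0$ fed into Lemma~\ref{lem ind}, and the stabilizer $G^{\Delta^{(k),(r)}}H'_{(k-r)}$, with $U_r$ acting by the translation \eqref{equ U-action} and $U_{r-1}$ by $\psi_{r-1}^{-1}$; starting at $r=0$ just folds the paper's base case, Lemma~\ref{lem 1st isom}, into the inductive step.

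When writing it out, fix the indices: $T_0$ should identify $V_r$ with $W_r$, the first block of $W_{(k-r+1)}$, and $S$ carries $T(V_r)$ onto $V_{r+1}$. Also make the key point of the step explicit. $U_{r-1}$ becomes a character because its translation by $S_{r-1}C$ moves only the $\Hom(W_{(k-r+1)},V_r)$-component, which is exactly the Fourier-transformed variable. It therefore turns into multiplication by $\psi^{-1}(\Tr(CTS_{r-1}))$, as in \eqref{equ U-action 2}, and this is why Lemma~\ref{lem ind} is needed in the slightly extended form \eqref{equ isom-pf3}. Finally, the coinvariant map is restriction, not fibre integration, so no Jacobian enters: all determinant factors come from \eqref{equ action1} and \eqref{equ action2}, and they match $\delta^{1/2}$ given by \eqref{equ modulus H'}.
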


\begin{rem} We record the formulas for the modulus characters  involved in the normalized inductions in Proposition \ref{key prop1} and Proposition \ref{key prop1'}.
It is easy to see that
\begin{equation}\label{equ modulus H}
\delta_{G^{\Delta^{(k),(r)}}\cdot H_{(k-r)}}=|\det\nolimits_{V_{(k)}}|^{(r-1)n}\otimes|\det\nolimits_{W_{(k)}}|^{(1-r)n}.
\end{equation}
Explicitly, for $\left(\Delta^{(k)}(g),\Delta^{(r)}(g)\right)\in G^{\Delta^{(k),(r)}}$ and $h\in\GL(W_{(k-r)})$,
$$\delta_{G^{\Delta^{(k),(r)}}\cdot H_{(k-r)}}\left(\left(\Delta^{(k)}(g),\Delta^{(r)}(g)\right)\cdot h \right)=|\det\nolimits_V(g)|^{(k-r)(r-1)n}\cdot |\det\nolimits_{W_{(k-r)}}(h)|^{(1-r)n}.$$
It is also easy to check that
\begin{equation}\label{equ modulus H'}
\delta_{G^{\Delta^{(k),(r)}}\cdot H'_{(k-r)}}=|\det\nolimits_{V_{(k)}}|^{rn}\otimes |\det\nolimits_{W_{(k)}}|^{-rn}.
\end{equation}
\end{rem}

\subsection{Proof of Proposition \ref{key prop1} and Proposition \ref{key prop1'} } 
It follows from Lemma \ref{lem coinvariant} that
$$\left(\omega_{(k),(k)}\right)_{\left(N_{(r)},\psi_{(r)}\right)}\cong\left(\left(\omega_{(k),(k)}\right)_{\left(N_1,\psi_1\right)}\right)_{\left(N_2\cdots N_r,\psi_2\cdots\psi_r\right)}.$$ We will induct on $r$ to prove Proposition \ref{key prop1} and Proposition \ref{key prop1'} simultaneously.  Let us first consider the case when $r=1$. Recall that $G^{\Delta^{(k),(1)}}=G^{\Delta_{(k)}}$.

\begin{lem}\label{lem 1st isom} We have
$$\left(\omega_{(k),(k)}\right)_{\left(N_{1},\psi_1\right)}\cong \ind_{\left(G^{\Delta_{(k)}}\times\GL(W_{(k-1)})\right)U_1}^{\GL(V)^{\Delta^{(k)}}\times\GL(W_{(k)})}\left(\omega_{(k-1),(k-1)}\right),$$ 
and $$\left(\omega_{(k),(k)}\right)_{\left(N_{1},\psi_1\right)}\cong \ind_{G^{\Delta_{(k)}}\times\GL(W_{(k-1)})}^{\GL(V)^{\Delta^{(k)}}\times\GL(W_{(k)})}\left(\omega_{(k-1),(k-2)}\right).$$
\end{lem}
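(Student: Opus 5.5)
We prove the lemma in two stages: first an explicit description of $N_1$-coinvariants in a suitable mixed model, then the second isomorphism obtained from the first by a partial Fourier transform. We work with $\omega_{(k),(k)}$ on the Schrödinger model $\CS(\BY_k)$, where
$$\BY_k=\Hom(V_1,W_{(k)})\oplus\Hom(W_{(k)},V_{(k-1)}),\qquad V_{(k-1)}=V_2\oplus\cdots\oplus V_k,$$
obtained from $\CS(\BX_k)$ by the partial Fourier transform \eqref{equ fourier} with $r=k$. In this model, \eqref{equ action3} with $A\in\Hom(V_{(k-1)},V_1)$ gives
$$\omega(\exp(A))\varphi(T,S)=\psi\big(\Tr_{V_1}(AST)\big)\varphi(T,S).$$
Hence $N_1$ acts by multiplication by characters, and $\psi_1(\exp A)=\psi(\Tr(AB_1))$. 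By the standard argument (Bernstein--Zelevinsky: coinvariants of $\CS$ of a space under a unipotent group acting through characters localize to the fibre), the $(N_1,\psi_1)$-coinvariants are $\CS(\CA)$, where
$$\CA:=\{(T,S)\in\BY_k \mid ST=B_1\}.$$
Here $B_1\colon V_1\to V_{(k-1)}$ is the inclusion into $V_2$. In particular $T\colon V_1\to W_{(k)}$ is injective and $S$ restricted to $T(V_1)$ is determined.

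Next we identify the group action on $\CA$. The group $\GL(V)^{\Delta^{(k)}}\times\GL(W_{(k)})$ acts on $\CA$ via \eqref{equ action1} and \eqref{equ action2}, up to characters. We choose the base point $T_0\colon V_1\to W_1$ equal to the identity, and take the closed subset
$$\CB:=\{(T_0,S)\mid S|_{W_1}=S_1\},$$
where $S_1(w)=(w,0,\dots,0)\in V_{(k-1)}$. Transitivity holds: $\GL(W_{(k)})$ acts transitively on injective maps $V_1\to W_{(k)}$, and once $T=T_0$ is fixed, the condition $ST_0=B_1$ forces $S|_{W_1}=S_1$. So $G\cdot\CB=\CA$. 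The stabilizer of $\CB$ consists of pairs $(\Delta^{(k)}(g),h)$ with $hT_0=T_0g$, so $h$ preserves $W_1$ and acts on it by $g$. Among those, we need the ones that also preserve $\CB$, which requires $S_1$-compatibility. This gives exactly $\big(G^{\Delta_{(k)}}\times\GL(W_{(k-1)})\big)U_1$. Then $\CS(\CB)\cong\CS(\Hom(W_{(k-1)},V_{(k-1)}))=\CS(\BX_{k-1})$, via $S\mapsto S|_{W_{(k-1)}}$. On this space the stabilizer acts as follows:
\begin{itemize}
\item $\GL(W_{(k-1)})$ and $\Delta_{(k-1)}(g)$ act by the Schrödinger action of $\omega_{(k-1),(k-1)}$;
\item $U_1$ acts by translation $X\mapsto X+S_1C$, which is the auxiliary action \eqref{equ U-action}.
\end{itemize}
Lemma \ref{lem ind} then yields an unnormalized compact induction. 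The remaining task is to check that the leftover characters $|\det|^{\pm\cdots}$ coming from \eqref{equ action1}, \eqref{equ action2} and from the restriction of $\phi$ to the fibre combine to $\delta^{1/2}$ of the subgroup, as in \eqref{equ modulus H'} with $r=1$. This converts $\uind$ into $\ind$ and proves the first isomorphism. This bookkeeping of modulus characters is the main obstacle: signs and exponents must match exactly. We will compute them on the generators $\Delta^{(k)}(g)$, $h\in\GL(W_{(k-1)})$ and $U_1$ separately.

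For the second isomorphism we use induction in stages. We write
$$\ind_{(G^{\Delta_{(k)}}\times\GL(W_{(k-1)}))U_1}=\ind_{G^{\Delta_{(k)}}\times\GL(W_{(k-1)})}\circ\,\ind_{(\cdots)U_1}.$$
Then it suffices to show
$$\big(\omega_{(k-1),(k-1)}\big)_{U_1}\cong\omega_{(k-1),(k-2)}$$
as $G^{\Delta_{(k)}}\times\GL(W_{(k-1)})$-modules, with appropriate twists. Concretely, $U_1$ acts by translations in the $V_2$-component along the image of $S_1$. We pass to a mixed model by Fourier transforming in the $\Hom(W_{(k-1)},V_2)$-variable, as in \eqref{equ fourier}. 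In that model the translations become multiplication by characters $\psi(\Tr(\cdot))$. The $U_1$-coinvariants then restrict functions to the locus where the dual variable in $\Hom(V_2,W_{(k-1)})$ vanishes. This leaves $\CS(\Hom(W_{(k-1)},V_{(k-2)}))$, which carries the representation $\omega_{(k-1),(k-2)}$, with the $\Delta$-factor on $V_2$ contributing only a character. Finally we check that this character, together with $\delta_{U_1}$, is absorbed into the change of normalization between the two inductions; this matches the modulus characters \eqref{equ modulus H} and \eqref{equ modulus H'} for $r=1$.
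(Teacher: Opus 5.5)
Your first isomorphism follows the paper's proof: the mixed model $\CS(\BY_k)$, the locus $\CA_1=\{ST=B_1\}$, the slice over $T=T_1$ (your $T_0$), Lemma \ref{lem ind}, and the identification of the slice with $\omega_{(k-1),(k-1)}$, with $U_1$ acting by \eqref{equ U-action}. The bookkeeping you defer does close up. The twist $|\det\nolimits_{V_{(k)}}|^{(2-k)n/2}\otimes|\det\nolimits_{W_{(k)}}|^{(k-2)n/2}$ from \eqref{equ action1}--\eqref{equ action2}, times the twist $|\det\nolimits_{V_{(k-1)}}|^{(k-1)n/2}\otimes|\det\nolimits_{W_{(k-1)}}|^{(1-k)n/2}$ from restricting to the slice, is $|\det\nolimits_{V_{(k-1)}}|^{n/2}\otimes|\det\nolimits_{W_{(k-1)}}|^{-n/2}$. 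This is $\delta^{1/2}$ of the stabilizer, by \eqref{equ modulus H'}.

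The second isomorphism, however, rests on an invalid reduction. Put $L:=G^{\Delta_{(k)}}\times\GL(W_{(k-1)})$ and $G:=\GL(V)^{\Delta^{(k)}}\times\GL(W_{(k)})$. Since $L\subset LU_1$, your displayed factorization of $\ind_{LU_1}$ through $\ind_L$ is backwards; induction in stages runs $L\subset LU_1\subset G$. What you actually need is an isomorphism of $LU_1$-modules $\delta_{LU_1}^{1/2}\otimes\omega_{(k-1),(k-1)}\cong\uind_L^{LU_1}(\omega_{(k-1),(k-2)})$. Given that, $\ind_{LU_1}^G(\omega_{(k-1),(k-1)})\cong\uind_L^G(\omega_{(k-1),(k-2)})=\ind_L^G(\omega_{(k-1),(k-2)})$, because $\delta_L=1$. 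Computing $(\omega_{(k-1),(k-1)})_{U_1}$ does not establish this, since the $U_1$-coinvariants of an $LU_1$-module $\tau$ do not tell you whether $\tau$ is induced from $L$. For example, if $U_1$ acts trivially on $\tau$, then $\tau_{U_1}=\tau$, yet $\uind_L^{LU_1}(\tau|_L)\cong\CS(U_1)\otimes\tau$, on which $U_1$ does not act trivially. So your Fourier-transform computation is correct but proves the wrong statement. It identifies the inducing datum only once you already know that $\omega_{(k-1),(k-1)}$ is induced from $L$.

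That missing fact is elementary here, and it is what the paper uses. $U_1$ acts on $\BX_{k-1}=\Hom(W_{(k-1)},V_2)\oplus\Hom(W_{(k-1)},V_{(k-2)})$ by $X\mapsto X+S_1C$. The map $C\mapsto S_1C$ is a linear bijection $\Hom(W_{(k-1)},W_1)\to\Hom(W_{(k-1)},V_2)$, so $U_1$ acts simply transitively on the first summand. Restriction to the second summand therefore realizes the required isomorphism, and no Fourier transform is needed. The paper packages this by applying Lemma \ref{lem ind} directly to the smaller slice $\CB_1'=\{(T_1,S_1+Y)\mid Y\in\Hom(W_{(k-1)},V_{(k-2)})\}$. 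Its $G$-translates still exhaust $\CA_1$, and its stabilizer is $L$. Restricting functions to $\CB_1'$ gives $\omega_{(k-1),(k-2)}$ twisted by a character that exactly cancels the outer twist in \eqref{equ isom-pf1}.
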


\begin{proof} We use the model $\CS(\BY_k)$ of the Weil representation $\omega_{(k),(k)}$. Since the twisted Jacquet functor is exact, by \eqref{equ action3}, we have
$$\left(\omega_{(k),(k)}\right)_{\left(N_{1},\psi_1\right)}=\CS(\BY_k)_{\left(N_{1},\psi_1\right)}\cong\CS(\CA_1),$$ where 
\begin{equation}\label{equ defn-ca}\CA_1:=\left\{(T,S)\mid \Tr_{V_1}(AST)=\Tr_{V_1}(AB_1), \forall A\in\Hom\left(V_{(k-1)},V_1\right) \right\}\end{equation} is a closed subset of $\BY_k$. It is easy to see that the condition in \eqref{equ defn-ca} is equivalent to 
\begin{equation}\label{equ st}ST=B_1.\end{equation} 
The group $\GL(V)^{\Delta^{(k)}}\times\GL(W_{(k)})$ acts on $\CA_1$ via the natural geometric action
$$(\Delta^{(k)}(g),h)\cdot(T,S)=(hT\tau_{(k)}(g)^{-1},\Delta_{(k-1)}(g)Sh^{-1}),\quad g\in\GL(V),\ h\in\GL(W_{(k)}).$$  Denote by $\rho$ the natural representation of $\GL(V)^{\Delta^{(k)}}\times\GL(W_{(k)})$ on $\CS(\CA_1)$ induced by this geometric action. By \eqref{equ action1}, we see that \begin{equation}\label{equ isom-pf1}\left(\omega_{(k),(k)}\right)_{\left(N_{1},\psi_1\right)}\cong |\det\nolimits_{V_{(k)}}|^{(2-k)n/2}\otimes|\det\nolimits_{W_{(k)}}|^{(k-2)n/2}\otimes\rho.\end{equation}

The condition \eqref{equ st} forces $\rank(T)=n$. Consequently, if  $(T,S)$ and $(T',S')$ satisfy the condition $ST=S'T'=B_1$,  there exists $h\in\GL(W_{(k)})$ such that 
\begin{equation}\label{equ trans-T} T'=hT. \end{equation}
Let us choose a specific element of $\CA_1$.  Set
$$\begin{tikzcd}[row sep=0.1cm]
	T_{1}: V_{1} & W_{(k)}\\
	v & (v,0,\cdots,0).
	\arrow[ from=1-1, to=1-2]
	\arrow[maps to, from=2-1, to=2-2]
\end{tikzcd}$$
Recall the map 
 $S_1:W_{1}\ra V_{(k-1)}$ defined in  \eqref{equ S_r}. View $S_1$ as an element of $\Hom\left(W_{(k)},V_{(k-1)}\right)$ via the natural projection $W_{(k)}\twoheadrightarrow W_{1}$. Then $S_1T_1=B_1$. Note that the image of $S_1$ is $V_{2}$. 
Put $$\CB_1:=\left\{(T,S)\in\CA_1\mid T=T_1\right\}.$$ 
Then $(T_1,S_1)\in\CB_1$. By \eqref{equ trans-T}, we see that
$$\left(\GL(V)^{\Delta^{(k)}}\times\GL(W_{(k)})\right)\cdot\CB_1=\CA_1.$$ It is easy to check that
\begin{equation}\label{equ def-cb}\CB_1=\left\{(T_1,S_1+X)\mid X\in\BX_{k-1} \right\},\end{equation} 
and
$$\Stab_{\left(\GL(V)^{\Delta^{(k)}}\times\GL(W_{(k)})\right)}(\CB_1)=\left(G^{\Delta_{(k)}}\times\GL(W_{(k-1)})\right)U_1.$$ Denote by $\rho'$ the natural action of $\left(G^{\Delta_{(k)}}\times\GL(W_{(k-1)})\right)U_1$ on  $\CS(\CB_1)$. By Lemma \ref{lem ind}, we have  
\begin{equation}\label{equ ind-isom}\rho\cong\mathrm{uind}_{\left(G^{\Delta_{(k)}}\times\GL(W_{(k-1)})\right)U_1}^{\GL(V)^{\Delta^{(k)}}\times\GL(W_{(k)})}(\rho').\end{equation} 

We now analyze the representation $\rho'$.
Consider the map
 \begin{equation}\label{equ isom-map}\begin{tikzcd}[row sep=0.1cm]
	\CS(\CB_1)& \CS\left(\BX_{k-1}\right)\\
	\varphi & \wt{\varphi},
	\arrow[ from=1-1, to=1-2]
	\arrow[maps to, from=2-1, to=2-2]
\end{tikzcd}\end{equation} which is defined by
$$\wt{\varphi}(X):=\varphi(T_1,S_1+X),\quad X\in\BX_{k-1}.$$
Recall the isomorphism \eqref{equ gp-isom2} $$G^{\Delta_{(k)}}\cong\GL(V)^{\Delta_{(k-1)}}.$$
By \eqref{equ action1}, the map \eqref{equ isom-map} establishes an isomorphism  
\begin{equation}\label{equ isom-pf2}\rho'\cong|\det\nolimits_{V_{(k-1)}}|^{(k-1)n/2}\otimes|\det\nolimits_{W_{(k-1)}}|^{(1-k)n/2}\otimes\omega_{(k-1),(k-1)}\end{equation} between the representations of $G^{\Delta_{(k)}}\times\GL(W_{(k-1)})$ and  $\GL(V)^{\Delta_{(k-1)}}\times\GL(W_{(k-1)})$.
Under the map \eqref{equ isom-map},
the action of $U_1$ on $\CS(\CB_1)$ is transferred to $\CS\left(\BX_{k-1}\right)$, and is given by
\begin{equation}\label{equ U-action1}(\exp(C)\wt{\varphi})(X)=\wt{\varphi}(X+S_1C),\quad C\in\Hom\left(W_{(k-1)},W_1\right).\end{equation}
Note that the action \eqref{equ U-action1} is precisely the one defined by \eqref{equ U-action}.

Combining \eqref{equ isom-pf1}, \eqref{equ ind-isom} and \eqref{equ isom-pf2}, we obtain 
$$\begin{aligned}\left(\omega_{(k),(k)}\right)_{\left(N_{1},\chi_1\right)}&\cong |\det\nolimits_{V_{(k)}}|^{(2-k)n/2}\otimes|\det\nolimits_{W_{(k)}}|^{(k-2)n/2}\otimes\rho\\
&\cong\mathrm{uind}_{\left(G^{\Delta_{(k)}}\times\GL(W_{(k-1)})\right)U_1}^{\GL(V)^{\Delta^{(k)}}\times\GL(W_{(k)})}\left(|\det\nolimits_{V_{(k-1)}}|^{(2-k)n/2}\otimes|\det\nolimits_{W_{(k-1)}}|^{(k-2)n/2}\otimes\rho'\right)\\
&\cong\mathrm{uind}_{\left(G^{\Delta_{(k)}}\times\GL(W_{(k-1)})\right)U_1}^{\GL(V)^{\Delta^{(k)}}\times\GL(W_{(k)})}\left(|\det\nolimits_{V_{(k-1)}}|^{n/2}\otimes|\det\nolimits_{W_{(k-1)}}|^{-n/2}\otimes\omega_{(k-1),(k-1)}\right)\\
&=\ind_{\left(G^{\Delta_{(k)}}\times\GL(W_{(k-1)})\right)U_1}^{\GL(V)^{\Delta^{(k)}}\times\GL(W_{(k)})}\left(\omega_{(k-1),(k-1)}\right),
\end{aligned}$$
which finishes the proof of the first statement of Lemma \ref{lem 1st isom}.

To prove the second isomorphism in Lemma \ref{lem 1st isom}, we shrink $\CB_1$. Let us consider
$$\CB_1':=\left\{(T_1,S_1+Y)\mid Y\in\Hom\left(W_{(k-1)},V_{(k-2)}\right)\right\}\subset\CB_1,$$
where we view $\Hom\left(W_{(k-1)},V_{(k-2)}\right)$ as a subspace of 
$\BX_{k-1}$ via the inclusion $V_{(k-2)}\incl V_{(k-1)}$. A direct computation shows that 
$$\left(\GL(V)^{\Delta^{(k)}}\times\GL(W_{(k)})\right)\cdot\CB'_1=\CA_1,$$ and
$$\Stab_{ \left(\GL(V)^{\Delta^{(k)}}\times\GL(W_{(k)})\right)}(\CB'_1)=G^{\Delta_{(k)}}\times\GL(W_{(k-1)}).$$
We still denote by $\rho'$ the natural action of $G^{\Delta_{(k)}}\times\GL(W_{(k-1)})$ on  
$\CS(\CB'_1)$.  
Under the  isomorphism \eqref{equ gp-isom1}
$$G^{\Delta_{(k)}}\cong\GL(V)^{\Delta_{(k-2)}},$$ as in \eqref{equ isom-pf2},  $$\rho'\cong|\det\nolimits_{V_{(k-2)}}|^{(k-1)n/2}\otimes|\det\nolimits_{W_{(k-1)}}|^{(2-k)n/2}\otimes\omega_{(k-1),(k-2)}$$
as representations of $G^{\Delta_{(k)}}\times\GL(W_{(k-1)})$ and  $\GL(V)^{\Delta_{(k-2)}}\times\GL(W_{(k-1)})$.
Then we obtain
$$\left(\omega_{(k),(k)}\right)_{\left(N_{1},\chi_1\right)}\cong\ind_{G^{\Delta_{(k)}}\times\GL(W_{(k-1)})}^{\GL(V)^{\Delta^{(k)}}\times\GL(W_{(k)})}\left(\omega_{(k-1),(k-2)}\right).$$
\end{proof}

We now assume that the statements of Proposition \ref{key prop1} and Proposition \ref{key prop1'}  hold for $r$, and show that they hold for $r+1$.
It follows from Lemma \ref{lem coinvariant}  that
$$\left(\omega_{(k),(k)}\right)_{\left(N_{(r+1)},\psi_{(r+1)}\right)}\cong\ind_{G^{\Delta^{(k),(r)}}\cdot H'_{(k-r)}}^{\GL(V)^{\Delta^{(k)}}\times\GL(W_{(k)})}\left(\left(\omega_{(k-r),(k-r)}\right)_{(N_{r+1},\psi_{r+1})}\otimes\psi_{(r-1)}^{-1}\right).$$ Recall that $$H'_{(k-r)}=\GL(W_{(k-r)})U_{(r)}=\GL(W_{(k-r)})U_rU_{(r-1)}.$$ Also recall that the action of
$G^{\Delta^{(k),(r)}}$ on $\omega_{(k-r),(k-r)}$ is via that of $\GL(V)^{\Delta_{(k-r)}}$, and the action of $U_r$ on $\omega_{(k-r),(k-r)}$ is given by \eqref{equ U-action}.

We analyze the twisted Jacquet module $\left(\omega_{(k-r),(k-r)}\right)_{(N_{r+1},\psi_{r+1})}$, as a representation of $$\left(\GL(V)^{\Delta_{(k-r)}}\times\GL(W_{(k-r)})\right)U_r$$ in two steps. 
 The first step is to analyze the action of $\GL(V)^{\Delta_{(k-r)}}\times\GL(W_{(k-r)})$. It follows the same line as the proof of Lemma \ref{lem 1st isom}, and will be discussed briefly.  
Recall that
$$\BX_{k-r}=\Hom\left(W_{(k-r)},V_{(k-r)}\right),\quad \BY_{k-r}=\Hom\left(V_{r+1},W_{(k-r)}\right)\oplus\Hom\left(W_{(k-r)},V_{(k-r-1)}\right),$$ and the intertwining operator $\CF:\CS\left(\BX_{k-r}\right)\ra \CS\left(\BY_{k-r}\right)$ between the models of $\omega_{(k-r),(k-r)}$ is given by the partial Fourier transform, i.e.,
$$\CF(\varphi)(T,S)=\int_{\Hom\left(W_{(k-r)},V_{r+1}\right)}\varphi(T^*,S)\psi(\Tr_{V_{r+1}}(T^*T))\ \d T^*.$$ 
By \eqref{equ action3}, we have
$$\left(\omega_{(k-r),(k-r)}\right)_{\left(N_{r+1},\chi_{r+1}\right)}=\CS(\BY_{k-r})_{(N_{r+1},\chi_{r+1})}\cong\CS(\CA_{r+1}),$$  where $$\CA_{r+1}:=\left\{(T,S)\in\BY_{k-r}\mid ST=B_{r+1}\right\}.$$
Denote by $\rho$ the natural representation of $\GL(V)^{\Delta_{(k-r)}}\times\GL(W_{(k-r)})$ on $\CS(\CA_{r+1})$ induced by the geometric action of 
$\GL(V)^{\Delta_{(k-r)}}\times\GL(W_{(k-r)})$ on $\CA_{r+1}$. Then, by \eqref{equ action1}, we have
\begin{equation}\label{equ isom-pf1'}\left(\omega_{(k-r),(k-r)}\right)_{\left(N_{r+1},\psi_{r+1}\right)}\cong|\det\nolimits_{V_{(k-r)}}|^{(2+r-k)n/2}\otimes|\det\nolimits_{W_{(k-r)}}|^{(k-r-2)n/2}\otimes\rho.\end{equation}
Let
$$\begin{tikzcd}[row sep=0.1cm]
	T_{r+1}: V_{1} & W_{(k-r)}\\
	v & (v,0,\cdots,0),
	\arrow[ from=1-1, to=1-2]
	\arrow[maps to, from=2-1, to=2-2]
\end{tikzcd}$$
and $$\begin{aligned}\CB_{r+1}:=\left\{(T,S)\in\CA_{r+1}\mid T=T_{r+1}\right\}
=\left\{(T_{r+1},S_{r+1}+X)\mid X\in \BX_{k-r-1}\right\},\end{aligned}$$ where $S_{r+1}$ is defined by \eqref{equ S_r}. It is easy to check that
$$\left(\GL(V)^{\Delta_{(k-r)}}\times\GL(W_{(k-r)})\right)\cdot\CB_{r+1}=\CA_{r+1},$$ and
$$\Stab_{\left(\GL(V)^{\Delta_{(k-r)}}\times\GL(W_{(k-r)})\right)}(\CB_{r+1})=\left(G^{\Delta_{(k-r)}}\times\GL(W_{(k-r-1)})\right)U_{r+1}.$$
Let $\rho'$ be the representation of $\left(G^{\Delta_{(k-r)}}\times\GL(W_{(k-r-1)})\right)U_{r+1}$ on  
$\CS(\CB_{r+1})$ induced by the natural geometric action of
$\left(G^{\Delta_{(k-r)}}\times\GL(W_{(k-r-1)})\right)U_{r+1}$ on  
$\CB_{r+1}$. Then we have
\begin{equation}\label{equ ind-isom'}\rho\cong\mathrm{uind}_{\left(G^{\Delta_{(k-r)}}\times\GL(W_{(k-r-1)})\right)U_{r+1}}^{\GL(V)^{\Delta_{(k-r)}}\times\GL(W_{(k-r)})}(\rho').\end{equation} As in \eqref{equ isom-pf2},
\begin{equation}\label{equ isom-pf2'}\rho'\cong|\det\nolimits_{V_{(k-r-1)}}|^{(k-r-1)n/2}\otimes|\det\nolimits_{W_{(k-r-1)}}|^{(1+r-k)n/2}\otimes\omega_{(k-1),(k-1)},\end{equation} where $G^{\Delta_{(k-r)}}$ and $U_{r+1}$ act on the right hand side via the isomorphism
$$\begin{tikzcd}[row sep=0.1cm]
	G^{\Delta_{(k-r)}} & {\GL(V)^{\Delta_{(k-r-1)}}} \\
	{\left(\Delta_{(k-r)}(g),\tau_{(k-r)}(g)\right)} & {\Delta_{(k-r-1)}(g)}
	\arrow["\sim", from=1-1, to=1-2]
	\arrow[maps to, from=2-1, to=2-2]
\end{tikzcd}$$
and \eqref{equ U-action} respectively.

The second step is to analyze the action of $U_r$,  a subgroup of $H'_{(k-r)}$, on  $\left(\omega_{(k-r),(k-r)}\right)_{(N_{r+1},\psi_{r+1})}$.
The action \eqref{equ U-action} of $U_r$ on $\CS\left(\BX_{k-r}\right)$ is transferred to $\CS\left(\BY_{k-r}\right)$ by the rule
$$\exp(C)\CF(\varphi):=\CF(\exp(C)\varphi),\quad C\in\Hom(W_{(k-r)},W_r).$$ If $\phi=\CF(\varphi)$, we obtain
\begin{equation}\label{equ U-action 2}
\begin{aligned}(\exp(C)\phi)(T,S)&=\int_{\Hom(W_{(k-r)},V_{r+1})}\varphi(T^*+S_rC,S)\psi(\Tr_{V_{r+1}}(T^*T))\ \d T^*\\
&=\int_{\Hom(W_{(k-r)},V_{r+1})}\varphi(T^*,S)\psi(\Tr_{V_{r+1}}(T^*T-S_rCT))\ \d T^*\\
&=\psi^{-1}(\Tr_{V_{r+1}}(S_rCT))\int_{\Hom(W_{(k-r)},V_{r+1})}\varphi(T^*,S)\psi(\Tr_{V_{r+1}}(T^*T))\ \d T^*\\
&=\psi^{-1}(\Tr_{W_r}(CTS_r))\cdot\phi(T,S).
\end{aligned}
\end{equation}
In particular, $U_r$ acts on $\CS(\CA_{r+1})$ by \eqref{equ U-action 2}.
Note that the restriction map
 $\CS(\CA_{r+1})\ra\CS(\CB_{r+1})$ is $U_r$-equivariant. Hence $U_r$ acts on $\CS(\CB_{r+1})$ via the character
$$\psi_r^{-1}(\exp(C)):=\psi^{-1}(\Tr_{W_r}(CT_{r+1}S_r)).$$ Then $\CS(\CB_{r+1})$ is a representation of $\left(G^{\Delta_{(k-r)}}\times\GL(W_{(k-r-1)})\right)U_{r+1}U_r$, isomorphic to $\rho'\otimes\psi_r^{-1}$.
We still denote by $\rho$ the resulting representation of $\left(\GL(V)^{\Delta_{(k-r)}}\times\GL(W_{(k-r)})\right)U_r$ on $\CS(\CA_{r+1})$. 
 Following the proof of \cite[Lemme 1.3]{min}, it is routine to check that
\begin{equation}\label{equ isom-pf3}\begin{tikzcd}[row sep=0.1cm]
	\rho & \mathrm{uind}_{\left(G^{\Delta_{(k-r)}}\times\GL(W_{(k-r-1)})\right)U_{r+1}U_r}^{\left(\GL(V)^{\Delta_{(k-r)}}\times\GL(W_{(k-r)})\right)U_r}\left(\rho'\otimes\psi_r^{-1}\right) \\
	\phi &  \left(gu\mapsto\rho(gu))\phi|_{\CB_{r+1}}\right)	\arrow[from=1-1, to=1-2]
	\arrow[maps to, from=2-1, to=2-2]
\end{tikzcd}\end{equation}
 is an isomorphism of representations, where $g\in \GL(V)^{\Delta_{(k-r)}}\times\GL(W_{(k-r)})$ and  $u\in U_r$. The inverse of the above map is given as follows. Let $$f\in \mathrm{uind}_{\left(G^{\Delta_{(k-r)}}\times\GL(W_{(k-r-1)})\right)U_{r+1}U_r}^{\left(\GL(V)^{\Delta_{(k-r)}}\times\GL(W_{(k-r)})\right)U_r}\left(\rho'\otimes\psi_r^{-1}\right).$$ For $(T,S)\in\CA_{r+1}$, choose $g\in\GL(V)^{\Delta_{(k-r)}}\times\GL(W_{(k-r)})$ and $(T_{r+1},S')\in\CB_{r+1}$ such that $(T,S)=g\cdot(T_{r+1},S')$. Then define the image $\phi_f\in\CS(\CA_{r+1})$ of $f$ by $$\phi_f(T,S)=f(g)(T_{r+1},S').$$

Combining the isomorphisms \eqref{equ isom-pf1'}, \eqref{equ ind-isom'}, \eqref{equ isom-pf2'} and \eqref{equ isom-pf3}, we see that $\left(\omega_{(k-r),(k-r)}\right)_{\left(N_{r+1},\psi_{r+1}\right)}$ is isomorphic to
$$\mathrm{uind}_{\left(G^{\Delta_{(k-r)}}\times\GL(W_{(k-r-1)})\right)U_{r+1}U_r}^{\left(\GL(V)^{\Delta_{(k-r)}}\times\GL(W_{(k-r)})\right)U_r}\left(|\det\nolimits_{V_{(k-r-1)}}|^{n/2}\otimes|\det\nolimits_{W_{(k-r-1)}}|^{-n/2}\otimes\omega_{(k-r-1),(k-r-1)}\otimes\psi_r^{-1}\right).$$
Note that, under the identification \eqref{equ gp-isom2} of  
$\GL(V)^{\Delta_{(k-r)}}$ with $G^{\Delta^{(k),(r)}}$, the subgroup $G^{\Delta_{(k-r)}}$ of $\GL(V)^{\Delta_{(k-r)}}\times\GL(W_{(k-r)})$ is identified with the subgroup $G^{\Delta^{(k),(r+1)}}$ of $\GL(V)^{\Delta^{(k)}}\times\GL(W_{(k)})$.
Hence $\left(\omega_{(k),(k)}\right)_{\left(N_{(r+1)},\psi_{(r+1)}\right)}$ is isomorphic to
$$\begin{aligned}
&\uind_{\left(G^{\Delta^{(k),(r+1)}}\times\GL(W_{(k-r-1)})\right)U_{r+1}U_rU_{(r-1)}}^{
\GL(V)^{\Delta^{(k)}}\times\GL(W_{(k)})}\left(\begin{array}{c}\delta_{G^{\Delta^{(k),(r)}}\cdot H'_{(k-r)}}^{1/2}\otimes|\det\nolimits_{V_{(k-r-1)}}|^{n/2}\otimes|\det\nolimits_{W_{(k-r-1)}}|^{-n/2}\\ \otimes\omega_{(k-r-1),(k-r-1)}\otimes\psi_{r}^{-1}\otimes\psi_{(r-1)}^{-1}\end{array}\right)\\
=&\ind_{G^{\Delta^{(k),(r+1)}}\cdot H'_{(k-r-1)}}^{
\GL(V)^{\Delta^{(k)}}\times\GL(W_{(k)})}\left(\omega_{(k-r-1),(k-r-1)}\otimes\psi_{(r)}^{-1}\right).
\end{aligned}$$
Therefore 
the statement of Proposition \ref{key prop1'}  holds for $r+1$.

For Proposition \ref{key prop1}, as in the proof of Lemma \ref{lem 1st isom}, replacing $\CB_{r+1}$ by $$\CB'_{r+1}:=\left\{(T_{r+1},S_{r+1}+X)\mid X\in \Hom\left(W_{(k-r-1)},V_{(k-r-2)}\right)\right\},$$ the previous argument yields the isomorphism
$$\left(\omega_{(k),(k)}\right)_{\left(N_{(r+1)},\psi_{(r+1)}\right)}\cong\ind_{G^{\Delta^{(k),(r+1)}}\cdot H_{(k-r-1)}}^{
\GL(V)^{\Delta^{(k)}}\times\GL(W_{(k)})}\left(\omega_{(k-r-1),(k-r-2)}\otimes\psi_{(r)}^{-1}\right).$$
Thus 
the statement of Proposition \ref{key prop1}  holds for $r+1$.

This completes the proof of Proposition \ref{key prop1}  and Proposition \ref{key prop1'}.

\begin{rem}\label{rem key prop1} \begin{enumerate}
\item If we replace $\GL(V)^{\Delta^{(k)}}$ by $\GL(V)^{\Delta^{(r+1)}}$, we obtain the isomorphism $$\left(\omega_{(k),(k)}\right)_{\left(N_{(r)},\psi_{(r)}\right)}\cong\ind_{G^{\Delta^{(r+1),(r)}}\cdot H_{(k-r)}}^{\GL(V)^{\Delta^{(r+1)}}\times\GL(W_{(k)})}\left(\omega_{(k-r),(k-r-1)}\otimes\psi_{(r-1)}^{-1}\right),$$ which will be used in the next section. 
\item If we only consider the action of $\GL(W_{(k)})$ (and forget the action of $\GL(V)^{\Delta^{(k)}}$) in the proof of  Proposition \ref{key prop1}  and Proposition \ref{key prop1'}, the same argument yields the following isomorphism
$$\left(\omega_{(k),(k)}\right)_{\left(N_{(r)},\psi_{(r)}\right)}\cong\ind_{H_{(k-r)}}^{\GL(W_{(k)})}\left(\omega_{(k-r),(k-r-1)}\otimes\psi_{(r-1)}^{-1}\right).$$
\end{enumerate}
\end{rem}

\subsection{Proof of Theorem \ref{thm intro1-new}} Now we are ready to prove Theorem \ref{thm intro1-new}.  For $\pi\in\Irr(\GL(W_{(k)}))$ and $\sigma\in\Irr(\GL_n)$,
by Proposition \ref{key prop1} and Lemma \ref{lem frob}, we have
$$\begin{aligned}&\Hom_{\GL(V)^{\Delta^{(k)}}\cdot N_{(k-1)}}\left(\Theta(\pi),\sigma\otimes\psi_{(k-1)}\right)\\
\cong&\Hom_{\left(\GL(V)^{\Delta^{(k)}}\cdot N_{(k-1)}\right)\times\GL(W_{(k)})}\left(\Theta(\pi)\otimes\pi,\sigma\otimes\psi_{(k-1)}\otimes\pi\right)\\
\cong&\Hom_{\left(\GL(V)^{\Delta^{(k)}}\cdot N_{(k-1)}\right)\times\GL(W_{(k)})}\left(\omega_{(k),(k)},\sigma\otimes\psi_{(k-1)}\otimes\pi\right)\\
\cong&\Hom_{\GL(V)^{\Delta^{(k)}}\times\GL(W_{(k)})}\left(\left(\omega_{(k),(k)}\right)_{\left(N_{(k-1)},\psi_{(k-1)}\right)},\sigma\otimes\pi\right)\\
\cong&\Hom_{\GL(V)^{\Delta^{(k)}}\times\GL(W_{(k)})}\left(\ind_{G^{\Delta^{(k),(k-1)}}\cdot H_{(1)}}^{\GL(V)^{\Delta^{(k)}}\times\GL(W_{(k)})}\left(\BC\otimes\psi_{(k-2)}^{-1}\right),\sigma\otimes\pi\right)\\
\cong&\Hom_{G^{\Delta^{(k),(k-1)}}\cdot H_{(1)}}\left(\sigma^\vee\otimes\pi^\vee,\delta_{G^{\Delta^{(k),(k-1)}}\cdot H_{(1)}}^{1/2}\otimes\psi_{(k-2)}\right).
\end{aligned}$$
Note that under the isomorphism
$$\begin{tikzcd}[row sep=0.1cm]
	\GL(V)^{\Delta^{(k)}}\times\GL(W_{(k)}) & \GL(W)^{\Delta^{(k-1)}}\times\GL(W_{(k)}) \\
	\left(\Delta^{(k)}(g),h\right) & \left(\Delta^{(k-1)}(g),h\right),
	\arrow[from=1-1, to=1-2]
	\arrow[maps to, from=2-1, to=2-2]
\end{tikzcd}$$
the subgroup $G^{\Delta^{(k),(k-1)}}$ is sent to the subgroup $$\left\{(\Delta^{(k-1)}(g),\Delta^{(k-1)}(g))\mid g\in\GL(W)\right\}$$ of $\GL(W)^{\Delta^{(k-1)}}\times\GL(W_{(k)})$. Therefore we obtain
$$\Hom_{\GL(V)^{\Delta^{(k)}}\cdot N_{(k-1)}}\left(\Theta(\pi),\sigma\otimes\psi_{(k-1)}\right)\cong\Hom_{\GL(W)^{\Delta^{(k-1)}}\cdot H_{(1)}}\left(\pi^\vee,\delta_{\GL(W)^{\Delta^{(k-1)}}\cdot H_{(1)}}^{1/2}\otimes\sigma\otimes\psi_{(k-2)}\right).$$
This completes the proof of Theorem \ref{thm intro1-new}. 

\section{Proof of Theorem \ref{thm intro2}}

We first introduce more notation to rephrase Theorem \ref{thm intro2}.

For each $1\leq r\leq k-1$, we denote by $P(V_{(r)})$ (resp. $P(W_{(r)})$) the standard parabolic subgroup of $\GL(V_{(r)})$ (resp. $\GL(W_{(r)})$) attached to the partition $(n,\cdots,n)$ of $rn$, and by $M(V_{(r)})$ (resp. $M(W_{(r)})$) its standard Levi subgroup.  For $r=0$, let $P(V_{(0)})$ and $P(W_{(0)})$ be the trivial subgroups.  Denote
$$
P_{(r)}:=P(V_{(r)})N_{(k-r-1)}\quad\textrm{and}\quad Q_{(r)}:=P(W_{(r)})U_{(k-r-1)}.$$ 
In particular, $$P_{(0)}=N_{(k-1)}\quad\textrm{and}\quad P_{(1)}=G_{(1)}.$$
Under the identification of $\GL(V_{(r)})$ with $\GL(W_{(r)})$, we identify the corresponding subgroups with each other in the obvious way.

For  a character  $\xi_{(r)}=\otimes_{j=k-r+1}^k\xi_j$ of $M(V_{(r)})=\prod_{j=k-r+1}^k\GL(V_j)$, and an integer $r'\geq1$, we denote by
$$\xi_{(r)}\otimes \underbrace{1\otimes1\cdots\otimes1}_{r'\ \textrm{times}}$$ the character of $M(V_{(r+r')})$ defined in the following way:
\begin{itemize}
\item its restriction to $\GL(V_{k-r-r'+1})\times\GL(V_{k-r-r'+2})\times\cdots\times\GL(V_{k-r'})$ is the character $\xi_{(r)}$,
\item its restriction to $\GL(V_{k-r'+1})\times\GL(V_{k-r'+2})\times\cdots\times\GL(W_k)$ is the trivial character.
\end{itemize}

Recall that we have defined the subgroup  $G_{(r)}=\GL(V_{(r)})N_{(k-r-1)}$. Also
recall that $$\delta_{G_{(r)}}(g)=|\det\nolimits_{V_{(r)}}|^{(1+r-k)n},\quad g\in\GL(V_{(r)}).$$
We define two specific characters
$$\nu_{(r)}:=\delta_{G_{(r)}}^{1/2}\cdot(\delta^{1/2}_{G_{(r-1)}}\otimes 1)\cdot(\delta^{1/2}_{G_{(r-2)}}\otimes1\otimes1)\cdots(\delta^{1/2}_{G_{(1)}}\otimes \underbrace{1\otimes1\cdots\otimes1}_{r-1\ \textrm{times}})$$ and
$$\mu_{(r)}:=\delta_{P_{(r)}}^{1/2}\cdot(\delta^{1/2}_{G_{(r-1)}}\otimes 1)\cdot(\delta^{1/2}_{G_{(r-2)}}\otimes1\otimes1)\cdots(\delta^{1/2}_{G_{(1)}}\otimes \underbrace{1\otimes1\cdots\otimes1}_{r-1\ \textrm{times}})$$ of $M(V_{(r)})$, where $\delta_{G_{(i)}}$ denotes its restriction to $M(V_{(i)})$.
Note that
\begin{equation}\label{equ char-relation1}\delta_{P_{(r)}}=\delta_{P(V_{(r)})}\cdot\delta_{G_{(r)}}.\end{equation}
Hence 
\begin{equation}\label{equ char-relation2}\mu_{(r)}=\delta_{P(V_{(r)})}^{1/2}\cdot\nu_{(r)}.\end{equation}
Let
$$\chi_r:=\left(\delta^{1/2}_{\GL(W)^{\Delta^{(k-r)}}\cdot H_{(r)}}\right)\mid_{\GL(W)^{\Delta^{(k-r)}}},$$ which is a character of  $\GL(W)^{\Delta^{(k-r)}}$, and also viewed as a character of $\GL(W)$. Explicitly, we have
$$\chi_r(g)=|\det\nolimits_W(g)|^{(k-r-1)rn/2},\quad g\in\GL(W).$$ Put
$$\chi_{(k-1)}:=\prod_{r=1}^{k-1}\chi_r.$$
In terms of the above notation, Theorem \ref{thm intro2} can be rephrased as the second statement of the following theorem. 

\begin{thm}\label{thm2} Let $\pi\in\Irr(\GL(W_{(k)}))$ and $\sigma\in\Irr(\GL_n)$. For $0\leq r\leq k-2$, we have
$$\begin{aligned}&\Hom_{\GL(V)^{\Delta^{(k-r)}}\cdot P_{(r)}}\left(\Theta(\pi),\sigma\otimes\mu_{(r)}\otimes\psi_{(k-r-1)}\right) \\\cong&\Hom_{\GL(W)^{\Delta^{(k-r-1)}}\cdot Q_{(r+1)}}\left(\pi^\vee,\left(\chi_{r+1}\otimes\sigma\right)\otimes\mu_{(r+1)}\otimes\psi_{(k-r-2)}\right).\end{aligned}$$
Moreover, if $\Theta(\pi)\cong\pi^\vee$, then
$$\Hom_{\GL(W)^{\Delta^{(k)}}\cdot U_{(k-1)}}\left(\pi,\sigma\otimes\psi_{(k-1)}\right)\cong\Hom_{\GL(W)^{\Delta^{(1)}}\cdot Q_{(k-1)}}\left(\pi,\left(\chi_{(k-1)}\otimes\sigma\right)\otimes\mu_{(k-1)}\right).$$ 
\end{thm}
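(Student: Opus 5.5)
The first statement is a generalization of Theorem \ref{thm intro1-new}, which is its case $r=0$: there $P_{(0)}=N_{(k-1)}$, $\mu_{(0)}$ is trivial, and $\mu_{(1)}=\delta_{P_{(1)}}^{1/2}=\delta_{G_{(1)}}^{1/2}$ on $\GL(W_k)$. I would prove it by the same Frobenius-reciprocity chain, with the parabolic part $P(V_{(r)})$ of $P_{(r)}$ handled through Jacquet modules.

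\emph{Step 1: reduction to coinvariants.} Write $\GL(V)^{\Delta^{(k-r)}}\cdot P_{(r)}=(\GL(V)^{\Delta^{(k-r)}}\cdot N_{(k-r-1)})\cdot P(V_{(r)})$. Since $\pi\otimes\Theta(\pi)$ is the maximal $\pi$-isotypic quotient of $\omega_{(k),(k)}$, the left-hand side equals
$$\Hom_{\GL(V)^{\Delta^{(k-r)}}\cdot P_{(r)}\times\GL(W_{(k)})}\left(\omega_{(k),(k)},\sigma\otimes\mu_{(r)}\otimes\psi_{(k-r-1)}\otimes\pi\right).$$
By Lemma \ref{lem coinvariant}, I first take the $(N_{(k-r-1)},\psi_{(k-r-1)})$-coinvariants. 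I would use the version of Proposition \ref{key prop1} in Remark \ref{rem key prop1}(1), with $\GL(V)^{\Delta^{(k-r)}}\times\GL(V_{(r)})$ acting. The expected output is an induced representation from $G^{\Delta^{(k-r),(k-r-1)}}\cdot H_{(r+1)}$ of $\omega_{(r+1),(r)}\otimes\psi_{(k-r-2)}^{-1}$. In this representation, $\GL(V_{(r)})$ acts through the Weil representation of the smaller pair $(\GL(W_{(r+1)}),\GL(V_{(r)}))$. Checking that $\GL(V_{(r)})$ may be carried along is routine: it commutes with $N_{(k-r-1)}$ up to normalization, and the proof of Proposition \ref{key prop1} does not disturb the block $V_{(r)}$.

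\emph{Step 2: the parabolic piece.} Next I take $(P(V_{(r)}),\mu_{(r)})$-coinvariants of $\omega_{(r+1),(r)}$, i.e.\ a twisted Jacquet module for the pair $(\GL(W_{(r+1)}),\GL(V_{(r)}))$. Here I would invoke M\'inguez's computation \cite{min} of Jacquet modules of the Weil representation, which gives a filtration by pieces induced from parabolics of $\GL(W_{(r+1)})$. Only the piece with the matching exponent $\mu_{(r)}$ should contribute. That piece should be induced from $Q_{(r+1)}$-type data with character $\mu_{(r+1)}$; this is how the recursive definitions of $\mu_{(r)}$ and $\nu_{(r)}$, as products of the $\delta_{G_{(i)}}^{1/2}$, are set up. To show that the other pieces contribute nothing to the Hom space, I would use exponent (central character) considerations, together with \cite{chen} on the structure of $\Theta(\pi)$.

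\emph{Step 3: Frobenius reciprocity.} I then apply Lemma \ref{lem frob}, together with Lemma \ref{lem bernstein} where an induction from a parabolic appears, to land on $\Hom_{\GL(W)^{\Delta^{(k-r-1)}}\cdot Q_{(r+1)}}(\pi^\vee,\cdots)$. The modulus factor on the $\GL(W)^{\Delta^{(k-r-1)}}$ part gives exactly $\chi_{r+1}$, as in the proof of Theorem \ref{thm intro1-new}. The bookkeeping of modulus characters, using \eqref{equ char-relation1} and \eqref{equ char-relation2}, is where I would verify that the exponent on $M(W_{(r+1)})$ is exactly $\mu_{(r+1)}$.

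\emph{Step 4: the second statement.} The second statement follows by iteration. Start from $\Hom_{\GL(W)^{\Delta^{(k)}}U_{(k-1)}}(\pi,\sigma\otimes\psi_{(k-1)})$ and write $\pi=\Theta(\pi^\vee)$. Apply the first statement with $r=0$ to $\pi^\vee$; the contragredient $(\pi^\vee)^\vee=\pi$ reappears. Under the identification of $V$ with $W$, the new space is again of the shape $\Hom(\Theta(\pi^\vee),\cdot)$ with $r=1$, so apply the first statement again, and continue through $r=k-2$. Each step multiplies the twist of $\sigma$ by $\chi_{r+1}$, and the twists accumulate to $\chi_{(k-1)}$. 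The final group is $\GL(W)^{\Delta^{(1)}}\cdot Q_{(k-1)}$ with no $\psi$ left.

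The main obstacle is Step 2: identifying precisely which constituent of the Jacquet module of $\omega_{(r+1),(r)}$ survives, and controlling the remaining constituents and possible extensions between them in the Hom space.
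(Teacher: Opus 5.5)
Your four steps follow the paper's proof. Step 1 is Proposition \ref{key prop1} in the form of Remark \ref{rem key prop1}(1), and Step 2 is Proposition \ref{key prop2}. Step 3 is transitivity of induction, after which the inducing subgroup is $G^{\Delta^{(k-r),(k-r-1)}}\cdot Q_{(r+1)}$, followed by Lemma \ref{lem frob}; Lemma \ref{lem bernstein} is not needed there. Step 4 is the same iteration.

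In Step 2 the paper does not look for a piece with a ``matching'' exponent. It inducts on $r$, using Kudla's filtration (Proposition \ref{prop jacquet mod}) for the maximal parabolic of $\GL(V_{(r)})$ with Levi $\GL(V_{k-r+1})\times\GL(V_{(r-1)})$. The base case is Chen's computation $\Theta_{(2),(1)}(\xi)\cong\Ind_{P(W_{(2)})}^{\GL(W_{(2)})}(\xi^{-1}\otimes1)$. The key input is a sign condition. Write the character as $\delta^{1/2}_{P(V_{(r)})}\xi_{(r)}$, with all exponents of $\xi_{(r)}$ nonpositive. Second adjointness then shows that a piece $\lambda_i$ with $i<n$ could contribute only if $\xi_{(r)}$ had exponent $n-i>0$ on $\GL_{n-i}$. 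The piece $\lambda_n$, built on $\CS(\GL_n)$, contributes for every character and yields $\Ind_{P(W_{(r+1)})}^{\GL(W_{(r+1)})}(\xi_{(r)}^{-1}\otimes1)$. Your worry about extensions is settled by the same computation: the center of $\GL_{n-i}$ acts through different characters on the two sides, so all $\mathrm{Ext}$-groups vanish, not just $\Hom$.

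What does not go through as you state it is the claim in Step 1 that $\GL(V_{(r)})$, and with it $\GL(V)^{\Delta^{(k-r)}}$, can be carried along unchanged. Your exact matching with $\chi_{r+1}$ and $\mu_{(r+1)}$ in Step 3 rests on this claim.

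Take $k=3$, $r=1$. By \eqref{equ action2}, $g_3\in\GL(V_3)$ acts on $\CS(\BY_3)$ by $|\det g_3|^{-3n/2}$ times the geometric action. By contrast, $\omega_{(2),(1)}(g_3)$ carries the factor $|\det g_3|^{-n}$. So on $\CS(\CB_1')$, $g_3$ acts by $|\det g_3|^{-n/2}\,\omega_{(2),(1)}(g_3)$. Likewise $(\diag(g,g,1_n),\diag(g,h'))$ acts by $|\det g|^{n/2}\,\omega_{(2),(1)}(h')$.

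In general, each of the $k-r-1$ steps contributes a factor $|\det|^{-n/2}$ on $\GL(V_{(r)})$. Hence $\GL(V_{(r)})$ acts on the inducing data by $\delta_{G_{(r)}}^{1/2}\otimes\omega_{(r+1),(r)}$. Since $\Delta^{(k)}(g)=\Delta^{(k-r)}(g)\Delta_{(r)}(g)$ acts without twist in Proposition \ref{key prop1}, $G^{\Delta^{(k-r),(k-r-1)}}$ acts by the character $\chi_r(g)=\delta_{G_{(r)}}^{-1/2}(\Delta_{(r)}(g))$ rather than trivially. Two consequences follow. You must take the $P(V_{(r)})$-coinvariants of $\omega_{(r+1),(r)}$ against $\delta_{G_{(r)}}^{-1/2}\mu_{(r)}$, which is still of the form required in Proposition \ref{key prop2}. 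And the twist of $\sigma$ becomes $\chi_{r+1}\chi_r^{-1}$.

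The paper's proof also writes these actions untwisted, so following it will not reveal the issue. If you carry the characters through, the recursion started from Theorem \ref{thm intro1-new} telescopes to $\Hom_{\GL(W)^{\Delta^{(1)}}\cdot Q_{(k-1)}}(\pi,\sigma\otimes\delta^{1/2}_{Q_{(k-1)}})$. Its characters agree with $\chi_{(k-1)}$ and $\mu_{(k-1)}$ only for $k=2$. So the modulus-character bookkeeping you defer is exactly the part of the argument that has to be redone.
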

We will prove Theorem \ref{thm2} at the end of this section.

\subsection{Jacquet modules of the Weil representation}
Let us first recall Kudla's filtration of the normalized Jacquet module of the Weil representation. In this subsection, we denote $H=\GL_m$ and $G=\GL_n$. If a subgroup of $H$ (resp. $G$) is isomorphic to $\GL_k$, we denote it by $H_k$ (resp. $G_k$).  We denote by $\omega_{m,n}$ the Weil representation of the dual pair $(H,G)$. For $0\leq k\leq n$, let $P_{k,n-k}$ be the standard parabolic subgroup of $G$ with Levi component $M_{k,n-k}=G_k\times G_{n-k}$. Let $R_{P_{k,n-k}}(\omega_{m,n})$ be the normalized Jacquet module of $\omega_{m,n}$. M\'{\i}nguez \cite[Proposition 3.2]{min} proved that $R_{P_{k,n-k}}(\omega_{m,n})$ possesses a filtration, called  Kudla's filtration. We record this filtration in the form restated in \cite[Proposition 3.4]{chen}.

\begin{prop}\label{prop jacquet mod}
As an $M_{k,n-k}\times H$-module, $R_{P_{k,n-k}}(\omega_{m,n})$ has a filtration
$$0=J_{k+1}\subset J_k\subset J_{k-1}\subset\cdots\subset J_0=R_{P_{k,n-k}}(\omega_{m,n}),$$ and for each $0\leq i\leq k$, the successive quotient $\lambda_i:=J_i/J_{i+1}$ is given by
$$\lambda_i\cong\Ind_{P_{k-i,i}\times G_{n-k}\times Q_{i,m-i}}^{M_k\times H}\left(\mu_{i}\otimes\sigma_i\otimes\omega_{m-i,n-k}\right).$$ Here:
\begin{itemize}
\item $P_{k-i,i}$ is the standard parabolic subgroup of $G_k$ with Levi component $G_{k-i}\times G_i$;
\item $Q_{i,m-i}$ is the standard parabolic subgroup of $H$ with Levi component $H_{i}\times H_{m-i}$;
\item $\sigma_i$ is the natural geometric action of $G_i\times H_i$ on $\CS(\GL_i)$;
\item $\omega_{m-i,n-k}$ is the Weil representation of $H_{m-i}\times G_{n-k}$;
\item $\mu_{i}$ is a character of the standard Levi subgroup of $P_{k-i,i}\times G_{n-k}\times Q_{i,m-i}$, given by
$$\mu_{i}=\begin{cases} 
|\det|^{(m-n+k-i)/{2}} &\textrm{on}\ G_{k-i}\ ;\\
|\det|^{(m-n+2k-i)/{2}} &\textrm{on}\ G_{i}\ ; \\
|\det|^{(k-i)/{2}} &\textrm{on}\ G_{n-k}\ ; \\
|\det|^{(n-m-2k+i)/{2}} &\textrm{on}\ H_{i}\ ; \\
|\det|^{(i-k)/{2}} &\textrm{on}\ H_{m-i}.
 \end{cases}$$
\end{itemize}
\end{prop}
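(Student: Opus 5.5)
Since the statement is \cite[Proposition 3.2]{min} in the form of \cite[Proposition 3.4]{chen}, one may simply cite it. To prove it directly I would follow the strategy of Section 3: pass to a mixed model in which the unipotent radical acts by characters, then stratify the resulting support set by orbits. Concretely, write the $n$-dimensional space of $G$ as $V'\oplus V''$ with $\dim V'=k$, $\dim V''=n-k$, and let $W$ be the $m$-dimensional space of $H$.

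\textbf{Step 1: mixed model.} By a partial Fourier transform as in \eqref{equ fourier}, realize $\omega_{m,n}$ on
$$\CS\left(\Hom(V',W)\right)\otimes\CS\left(\Hom(W,V'')\right).$$
Analogues of \eqref{equ action1}--\eqref{equ action3} then hold. In particular, the unipotent radical $N_{k,n-k}=\{\exp(A)\mid A\in\Hom(V'',V')\}$ acts by multiplication by $\psi(\Tr(AST))$, while $M_{k,n-k}\times H$ acts geometrically up to explicit powers of $|\det|$. Since the Jacquet functor is exact, this gives
$$(\omega_{m,n})_{N_{k,n-k}}\cong\CS(\CA),\qquad \CA=\{(T,S)\mid ST=0\},$$
with the twisted geometric action of $M_{k,n-k}\times H$.

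\textbf{Step 2: filtration by rank.} For $0\le i\le k$, let $\CA_{\ge i}=\{(T,S)\in\CA\mid \rank T\ge i\}$. These sets are open and decreasing in $i$. Set $J_i:=\CS(\CA_{\ge i})$, twisted by $\delta_P^{-1/2}$. Then $J_i/J_{i+1}\cong\CS(\CA_i)$, where $\CA_i$ is the locally closed stratum $\{\rank T=i\}$. On $\CA_i$ the group $G_k\times H$ acts transitively on the $T$-coordinate. Fix a representative $T_i$ with kernel the last $k-i$ coordinates and image the first $i$ coordinates of $W$; its stabilizer lies in $P_{k-i,i}\times Q_{i,m-i}$. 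The fibre is then
$$\{(T,S)\mid \ker T=\ker T_i,\ \operatorname{im}T=\operatorname{im}T_i,\ S|_{\operatorname{im}T_i}=0\}\cong \GL_i\times\Hom(W_{m-i},V'').$$
Applying Lemma \ref{lem ind} with $\CB$ equal to this fibre, $\CS(\CA_i)$ is unnormalized-induced from $P_{k-i,i}\times G_{n-k}\times Q_{i,m-i}$ of $\CS(\GL_i)\otimes\CS(\Hom(W_{m-i},V''))$. The unipotent radicals act trivially, $G_i\times H_i$ acts on $\CS(\GL_i)$ by the geometric action $\sigma_i$, and $H_{m-i}\times G_{n-k}$ acts on the second factor by $\omega_{m-i,n-k}$ up to a character. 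The quotients are compact, so $\uind=\uInd$.

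\textbf{Step 3: bookkeeping of characters.} Finally, convert to normalized induction. Collect the powers of $|\det|$ coming from the Weil representation formulas, the factor $\delta_{P_{k,n-k}}^{-1/2}$ in $R_{P_{k,n-k}}$, the modulus character $\delta^{1/2}$ of $P_{k-i,i}\times G_{n-k}\times Q_{i,m-i}$, and the difference between the Weil normalization of $\omega_{m,n}$ and that of $\omega_{m-i,n-k}$. The result on each Levi block should be the character $\mu_i$ listed in the statement. This exponent bookkeeping is where I expect the main difficulty, since errors are easy to make. I would check the exponents against the extreme cases $i=0$ and $i=k$, and against the known case $k=n$ treated in \cite{min}, and verify that the $G_i$ and $H_i$ exponents are compatible with $\sigma_i$. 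That compatibility requires the two exponents to sum to $0$ on the diagonal $\GL_i$, and indeed $(m-n+2k-i)+(n-m-2k+i)=0$.
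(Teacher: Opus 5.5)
Your argument is correct, and it takes a genuinely different route from the paper. The paper gives no proof of this proposition: it imports Kudla's filtration as a black box from \cite[Proposition 3.2]{min}, in the normalization of \cite[Proposition 3.4]{chen}. You instead rederive it with the machinery of Section 3. This is essentially the proof of Lemma \ref{lem 1st isom} with the trivial character in place of $\psi_1$. The partial Fourier transform in the $\Hom(W,V')$-variable makes the unipotent radical act by $\psi(\Tr(AST))$, so the unnormalized Jacquet module is just $\CS(\{ST=0\})$. The open strata $\CA_{\geq i}$ together with Lemma \ref{lem ind} then give the filtration, with $J_k=\CS(\CA_{\geq k})$ at the bottom exactly as in the statement. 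The citation buys brevity. Your route buys a self-contained proof and, more usefully, a check that the cited exponents agree with the paper's own normalization $|\det g|^{-m/2}|\det h|^{n/2}$ of $\omega_{m,n}$. The vanishing argument in Proposition \ref{key prop2} depends on exactly this, through the exponent of $\mu_i$ on $G_{k-i}$.

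That check is your Step 3, which you left undone; it does come out right. In your mixed model, $h\in H$ acts by $|\det h|^{(n-2k)/2}\phi(h^{-1}T,Sh)$, and $(g',g'')\in G_k\times G_{n-k}$ acts by $|\det g'|^{m/2}|\det g''|^{-m/2}\phi(Tg',g''^{-1}S)$ (compare \eqref{equ action1}, \eqref{equ action2}). After twisting by $\delta_{P_{k,n-k}}^{-1/2}$ the exponents are $(m-n+k)/2$ on $G_k$, $(k-m)/2$ on $G_{n-k}$, and $(n-2k)/2$ on $H$. On the fibre, removing the normalization of $\omega_{m-i,n-k}$ leaves $(k-i)/2$ on $G_{n-k}$ and $-k/2$ on $H_{m-i}$. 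Passing from $\uInd$ to $\Ind$ then adds $\delta^{-1/2}$ of $P_{k-i,i}\times G_{n-k}\times Q_{i,m-i}$, namely $-i/2,\ (k-i)/2,\ 0,\ -(m-i)/2,\ i/2$ on $G_{k-i},G_i,G_{n-k},H_i,H_{m-i}$. The totals are exactly the five exponents of $\mu_i$.

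Two small corrections are needed.
\begin{enumerate}
\item Since $G_k$ moves $T$ by $T\mapsto Tg'^{-1}$, the representative $T_i$ should have kernel spanned by the \emph{first} $k-i$ basis vectors of $V'$ for its stabilizer to be the upper parabolic $P_{k-i,i}$. With the last $k-i$ you get the opposite-type parabolic, which is fine only up to conjugation.
\item The vanishing of the sum of the $G_i$- and $H_i$-exponents is not forced by $\sigma_i$. Let $(c,d)\in G_i\times H_i$ act by $\phi(a)\mapsto\phi(d^{-1}ac)$. Multiplication by $|\det a|^{\beta}$ gives $|\det c|^{\alpha}|\det d|^{\beta}\otimes\sigma_i\cong|\det c|^{\alpha+\beta}\otimes\sigma_i$, so only the sum is an isomorphism invariant. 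The genuine a priori check is that the diagonal centre $\{(z1_n,z1_m)\}$, which acts trivially on $\omega_{m,n}$, acts trivially on every $\lambda_i$; it does.
\end{enumerate}
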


\subsection{Coinvariants of the Weil representation attached to parabolic subgroups} Recall that $\omega_{(r+1),(r)}$ is the Weil representation of $\GL(W_{(r+1)})\times\GL(V_{(r)})$. For $\chi\in\Irr(\GL(V_{(r)}))$, we denote by $\Theta_{(r+1),(r)}(\chi)$ its big theta lift to $\GL(W_{(r+1)})$.
Our proof of Theorem \ref{thm2} relies on the following proposition.

\begin{prop}\label{key prop2}
Let $\xi_{(r)}$ be a character of $M(V_{(r)})$ of the form $\otimes_{j=k-r+1}^k|\det\nolimits_V|^{\alpha_j}$, and set $$\xi'_{(r)}:=\delta^{1/2}_{P(V_{(r)})}\xi_{(r)}.$$ Suppose that
\begin{equation}\label{equ char-cond}\alpha_j\leq0,\quad\forall k-r+1\leq j\leq k.\end{equation} Then we have
$$\left(\omega_{(r+1),(r)}\right)_{\left(P(V_{(r)}),\xi'_{(r)}\right)}\cong\Ind^{\GL(W_{(r+1)})}_{P(W_{(r+1)})}\left(\xi^{-1}_{(r)}\otimes1\right).$$
\end{prop}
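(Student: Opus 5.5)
We argue by induction on $r$, peeling off one $\GL_n$-block of $P(V_{(r)})$ at a time with Kudla's filtration (Proposition \ref{prop jacquet mod}). For $r=0$ both sides are the trivial representation of $\GL(W_{(1)})$, since $\omega_{(1),(0)}$ is trivial. For $r\geq1$ we write
$$P(V_{(r)})=\left(\GL(V_{k-r+1})\times P(V_{(r-1)})\right)\cdot N',$$
where $N'$ is the unipotent radical of the maximal parabolic $P'$ of $\GL(V_{(r)})$ with Levi $\GL(V_{k-r+1})\times\GL(V_{(r-1)})$. By Lemma \ref{lem coinvariant} we first take $N'$-coinvariants, which gives $\delta_{P'}^{1/2}R_{P'}(\omega_{(r+1),(r)})$. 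Next we take $\GL(V_{k-r+1})$-coinvariants against the first factor of $\xi'_{(r)}$. Last we take $P(V_{(r-1)})$-coinvariants against the remaining factors. We first check that $\delta_{P(V_{(r)})}^{1/2}$ restricted to $P(V_{(r-1)})$ equals $\delta_{P(V_{(r-1)})}^{1/2}$ times a power of $|\det|$ that cancels against $\delta_{P'}^{1/2}$. This is what makes the induction hypothesis applicable with the new $\alpha$'s still $\leq0$.

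Apply Proposition \ref{prop jacquet mod} with $G=\GL(V_{(r)})$ ($rn$ in place of $n$), $H=\GL(W_{(r+1)})$ ($m=(r+1)n$) and block size $n$ in place of $k$. This gives a filtration of $R_{P'}(\omega_{(r+1),(r)})$ with pieces $\lambda_i$, $0\leq i\leq n$. Each $\lambda_i$ is induced from $P_{n-i,i}\times\GL(V_{(r-1)})\times Q_{i,m-i}$ of $\mu_i\otimes\sigma_i\otimes\omega_{m-i,(r-1)n}$.

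The top piece is $i=n$. There $\sigma_n=\CS(\GL_n)$ is the regular representation of $\GL_n\times H_n$. Its $(\GL(V_{k-r+1}),|\det|^{a})$-coinvariants are the character $|\det|^{-a}$ of $H_n$, because $\CS(\GL_n)_{(\GL_n,\chi)}\cong\chi^{-1}$. We take this quotient after using transitivity of induction, or equivalently Lemma \ref{lem coinvariant} applied inside the induced model. The result is
$$\Ind^{\GL(W_{(r+1)})}_{Q_{n,rn}}\left(|\det|^{-\alpha_{k-r+1}}\otimes\omega_{(r),(r-1)}\right),$$
after the exponents in $\mu_n$ cancel against $\xi'$ and $\delta_{P'}^{1/2}$; this cancellation is a routine but careful bookkeeping check. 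Now take $P(V_{(r-1)})$-coinvariants. They commute with $\Ind$ from the $W$-side, since they act on a different factor. By the induction hypothesis applied to $\omega_{(r),(r-1)}$ (relabelling $V_{(r-1)}$, $W_{(r)}$), we get $\Ind_{P(W_{(r)})}(\xi_{(r-1)}^{-1}\otimes1)$. Induction in stages then yields $\Ind_{P(W_{(r+1)})}^{\GL(W_{(r+1)})}(\xi_{(r)}^{-1}\otimes1)$.

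The main obstacle is the pieces $i<n$ together with the non-exactness of coinvariants. We plan to show that these pieces contribute nothing, and that the extension problem does not interfere. As a $\GL(V_{k-r+1})$-module, $\lambda_i$ for $i<n$ is parabolically induced from $P_{n-i,i}$. Its $\GL_{n-i}$-factor is a twist of a Weil representation $\omega$ of $\GL_{n-i}$ (paired trivially with $H_0$), hence a character $|\det|^{(m-rn+n-i)/2}$. So every irreducible subquotient of $\lambda_i$, restricted to $\GL(V_{k-r+1})$, has cuspidal support containing the unramified exponents $\tfrac{(m-rn+n-i)}{2}+\tfrac{n-i-1}{2},\dots$. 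These are strictly positive once $m=(r+1)n$. By contrast, the character $|\det|^{\alpha_{k-r+1}}\delta^{1/2}$ we pair against has cuspidal support in exponents $\alpha_{k-r+1}+\tfrac{n-1}{2},\dots,\alpha_{k-r+1}-\tfrac{n-1}{2}$, modified by the $\delta_{P'}$ shift. The hypothesis $\alpha_j\leq0$ should guarantee that the two supports are disjoint.

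We then use the Bernstein decomposition of $\Rep(\GL(V_{k-r+1}))$. The functor of $(\GL(V_{k-r+1}),\chi)$-coinvariants factors through projection onto the Bernstein component (indeed the infinitesimal-character summand) of $\chi$, and that projection is exact. Hence the coinvariants of $\delta_{P'}^{1/2}R_{P'}(\omega)$ equal those of its projection, and this projection is the image of $\lambda_n$ alone. The higher derived coinvariants also vanish on the other pieces, so no extension issues arise. The step to verify most carefully is the precise exponent comparison in the previous paragraph, including the central-character part. If it fails for some $i$, the fallback is to compare central characters of $Z(\GL(V_{k-r+1}))$ only, using $\alpha\leq0$.
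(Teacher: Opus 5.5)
Your argument is correct and has the same skeleton as the paper's proof:
\begin{itemize}
\item induction on $r$;
\item coinvariants taken successively for $N'$, $\GL(V_{k-r+1})$ and $P(V_{(r-1)})$;
\item Kudla's filtration (Proposition \ref{prop jacquet mod}) with pieces $\lambda_i$;
\item elimination of the pieces $i<n$ by an exponent comparison using $\alpha_{k-r+1}\le 0$;
\item on $\lambda_n$, the identity $\CS(\GL_n)_{(\GL_n,\chi)}\cong\chi^{-1}$, followed by the induction hypothesis and induction in stages.
\end{itemize}
It departs from the paper in two places.

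\textbf{Base case.} The paper begins at $r=1$ by quoting Chen's computation of $\Theta_{(2),(1)}(\xi_{(1)})$. You begin at the trivial case $r=0$ and let the inductive step treat $r=1$; this uses Proposition \ref{prop jacquet mod} for the improper parabolic, i.e.\ the rank stratification of $\Hom(W_{(2)},V_{(1)})$. This is legitimate, and it makes the proof self-contained, since it reproves the case of Chen's theorem that is needed.

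\textbf{The pieces $i<n$.} The paper disposes of $\lambda_i$, $i<n$, only by showing $\Hom_{\GL(V_{k-r+1})}(\lambda_i,\xi_{(r)})=0$ via Bernstein's Frobenius reciprocity. By right exactness, this only shows that the coinvariants of $J_n=\lambda_n$ surject onto those of the whole Jacquet module; injectivity is left implicit. Your Bernstein-center idea supplies it, but it should be stated correctly, in two respects.
\begin{itemize}
\item The cuspidal supports need not be disjoint: the $\GL_i$-factor carries $\CS(\GL_i)$, which realizes every infinitesimal character. What you need is that the $\GL_{n-i}$-block always contributes the exponent $(n+1)/2$, while every exponent of $|\det|^{\alpha_{k-r+1}}$ is at most $\alpha_{k-r+1}+(n-1)/2\le (n-1)/2$.
\item There is no ``infinitesimal-character summand'' of a non-finitely generated module such as $\lambda_n$. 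Instead, after projecting to the unramified Bernstein block of $\GL(V_{k-r+1})$, take $z=\prod_{l}(x_l-q^{-(n+1)/2})$ in its center, with $q$ the residue field cardinality. Then $z$ kills each $\lambda_i$ with $i<n$: it acts through the center of the Levi, where it is divisible by $\prod_{l\le n-i}(y_l-q^{-(n+1)/2})$, which vanishes on $|\det|^{(2n-i)/2}$. Hence $zJ_0\subseteq J_n$. Meanwhile $z$ acts on $\xi_{(r)}$-coinvariants by a nonzero scalar. Together these give $(J_n)_{(\GL(V_{k-r+1}),\xi_{(r)})}\cong (J_0)_{(\GL(V_{k-r+1}),\xi_{(r)})}$.
\end{itemize}

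Finally, the ``$\delta_{P'}$ shift'' you mention cancels exactly, because $\delta_{P'}$ and $\delta_{P(V_{(r)})}$ agree on $\GL(V_{k-r+1})$. So you pair against $|\det|^{\alpha_{k-r+1}}$ itself. Your central-character fallback would not have worked anyway, since the $\GL_i$-part of $\lambda_i$ is arbitrary.
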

\begin{proof}
We prove by induction on $r$.  First, let us consider the case $r=1$. Note that $P(V_{(1)})=G_{(1)}$ and $\xi'_{(1)}=\xi_{(1)}$.
By \cite[Theorem 1.4]{chen}, we have
$$\begin{aligned}\left(\omega_{(2),(1)}\right)_{\left(P(V_{(1)}),\xi'_{(1)}\right)}=\Theta_{(2),(1)}\left(\xi_{(1)}\right)
\cong\Ind^{\GL(W_{(2)})}_{P(W_{(2)})}\left(\xi_{(1)}^{-1}\otimes 1\right).
\end{aligned}$$ 

Now assume that this proposition holds for $\left(\omega_{(r),(r-1)}\right)_{\left(P(V_{(r-1)}),\xi'_{(r-1)}\right)}$ for any character 
$\xi_{(r-1)}$ that satisfies the condition \eqref{equ char-cond}.
For $\left(\omega_{(r+1),(r)}\right)_{\left(P(V_{(r)}),\xi'_{(r)}\right)}$, we see that
$$\left(\omega_{(r+1),(r)}\right)_{\left(P(V_{(r)}),\xi'_{(r)}\right)}=\left(\left(\left(\omega_{(r+1),(r)}\right)_{N_{k-r+1}} \right)_{\left(\GL(V_{k-r+1}),\xi'_{(r)}\right)}\right)_{\left(P(V_{(r-1)}),\xi'_{(r)}\right)}.$$
Let $P(V_{k-r+1,(r-1)})$ denote the standard parabolic subgroup of $\GL(V_{(r)})$ whose standard Levi subgroup is $\GL(V_{k-r+1})\times\GL(V_{(r-1)})$. Then $N_{k-r+1}$ is the unipotent radical of $P(V_{k-r+1,(r-1)})$.
By Proposition \ref{prop jacquet mod}, the unnormalized Jacquet module $\left(\omega_{(r+1),(r)}\right)_{N_{k-r+1}}$ has a filtration whose successive quotients are $\delta^{1/2}_{P(V_{k-r+1,(r-1)})}\lambda_i$, where $0\leq i\leq n$, and
$$\lambda_i\cong\Ind_{P_{n-i,i}\times \GL(V_{(r-1)})\times Q_{i,(r+1)n-i}}^{\GL(V_{k-r+1})\times\GL(V_{(r-1)})\times\GL(W_{(r+1)})}\left(\mu_i\otimes\sigma_i\otimes\omega_{(r+1)n-i,(r-1)n}\right).$$ 
Note that
$$\delta_{P(V_{k-r+1,(r-1)})}|_{\GL(V_{k-r+1})}=\delta_{P(V_{(r)})}|_{\GL(V_{k-r+1})}.$$
Hence
$$\left(\delta^{1/2}_{P(V_{k-r+1,(r-1)})}\lambda_i \right)_{\left(\GL(V_{k-r+1}),\xi'_{(r)}\right)}=\left(\lambda_i \right)_{\left(\GL(V_{k-r+1}),\xi_{(r)}\right)}.
$$

Let us prove the following claim: \begin{equation}\label{equ claim-lambda}\left(\lambda_i \right)_{\left(\GL(V_{k-r+1}),\xi_{(r)}\right)}=0,\quad\forall 0\leq i\leq n-1.\end{equation} 
Since $\mu_{i}|_{\GL_{n-i}}=|\det|^{(2n-i)/2}$, we see that
$$\lambda_i\cong\Ind_{P_{n-i,i}}^{\GL(V_{k-r+1})}\left(|\det|^{(2n-i)/2}\otimes\tau\right)$$ as a representation of $\GL(V_{k-r+1})$, where $\tau$ is some smooth representation of $\GL_i$.
By Lemma \ref{lem bernstein}, we have
\begin{equation}\label{equ sec-adj1}\begin{aligned}\Hom_{\GL(V_{k-r+1})}\left(\lambda_i, \xi_{(r)}\right)
&\cong\Hom_{\GL_{n-i}\times\GL_i}\left(|\det|^{(2n-i)/2}\otimes\tau, R_{\overline{P}_{n-i,i}}\left(\xi_{(r)}\right)\right)\\
&\cong\Hom_{\GL_{n-i}\times\GL_i}\left(|\det|^{(2n-i)/2}\otimes\tau, \delta_{P_{n-i,i}}^{1/2}\xi_{(r)}\right).
\end{aligned}\end{equation}
The condition \eqref{equ char-cond} on $\xi_{(r)}$ implies that $$\xi_{(r)}|_{\GL_{n-i}}=|\det|^\alpha$$ for some $\alpha\leq0$.
If the quotient space in \eqref{equ claim-lambda} is nonzero,  the Hom-space in \eqref{equ sec-adj1} must be nonzero, forcing 
\begin{equation}\label{equ det}\begin{aligned}|\det|^{(2n-i)/2}=\left(\delta_{P_{n-i,i}}^{1/2}\xi_{(r)}\right)|_{\GL_{n-i}}
=|\det|^{i/2}\cdot|\det|^\alpha.
\end{aligned}\end{equation}
Since $\alpha\leq0$ and $i\leq n$, the equality \eqref{equ det} forces  $i=n$. Hence the claim \eqref{equ claim-lambda} holds.

Note that
$$\left(\delta^{-1}_{P(V_{k-r+1,(r-1)})}\delta_{P(V_{(r)})}\right)|_{P(V_{(r-1)})}=\delta_{P(V_{(r-1)})}.$$
Therefore the claim \eqref{equ claim-lambda} and the description of $\lambda_n$ (see Proposition \ref{prop jacquet mod}) imply that 
\begin{equation}\label{equ general omega_N}\begin{aligned}&\left(\left(\left(\omega_{(r+1),(r)}\right)_{N_{k-r+1}} \right)_{\left(\GL(V_{k-r+1}),\xi'_{(r)}\right)}\right)_{\left(P(V_{(r-1)}),\xi'_{(r)}\right)}\\
\cong&\left((\lambda_n)_{\left(\GL(V_{k-r+1}), \xi_{(r)}\right)}\right)_{\left(P(V_{(r-1)}),\delta^{1/2}_{P(V_{(r-1)})}\xi_{(r)}\right)}\\
\cong&\Ind_{P(W_{k-r,(r)})}^{\GL(W_{(r+1)})}\left(\begin{array}{c}\left(|\det\nolimits_{V_{k-r+1}}|^n\otimes\CS(\GL_n)\otimes|\det\nolimits_{W_{k-r}}|^{-n}\right)_{\left(\GL(V_{k-r+1}),\xi_{(r)}\right)}\\ \otimes\left(\omega_{(r),(r-1)}\right)_{\left(P(V_{(r-1)}),\delta^{1/2}_{P(V_{(r-1)})}\xi_{(r)}\right)}\end{array}\right).\end{aligned}
\end{equation}
It is easy to see that
\begin{equation}\label{equ lambda1}\begin{aligned}\left(|\det\nolimits_{V_{k-r+1}}|^n\otimes\CS(\GL_n)\otimes|\det\nolimits_{W_{k-r}}|^{-n}\right)_{\left(\GL(V_{k-r+1}),\xi_{(r)}\right)}
=\left(\xi^{-1}_{(r)}\otimes1\right)|_{\GL(W_{k-r})}.
\end{aligned}\end{equation}
On the other hand, the character $\xi_{(r)}|_{M(V_{(r-1)})}$ satisfies the condition \eqref{equ char-cond} in the inductive hypothesis for the case $r-1$.
Hence we have
\begin{equation}\label{equ lambda2}\begin{aligned}\left(\omega_{(r),(r-1)}\right)_{\left(P(V_{(r-1)}),\delta^{1/2}_{P(V_{(r-1)})}\xi_{(r)}\right)}
\cong\Ind^{\GL(W_{(r)})}_{P(W_{(r)})}\left(\left(\xi^{-1}_{(r)}\otimes1\right)|_{P(W_{(r)})}\right).
\end{aligned}\end{equation}
Combining \eqref{equ general omega_N}, \eqref{equ lambda1} and \eqref{equ lambda1}, we obtain
$$\begin{aligned}
(\omega_{(r+1),(r)})_{(P(V_{(r)}),\xi'_{(r)})}
\cong&\Ind_{P(W_{k-r,(r)})}^{\GL(W_{(r+1)})}\left(\left(\xi^{-1}_{(r)}\otimes1\right)|_{\GL(W_{k-r})}\otimes\Ind^{\GL(W_{(r)})}_{P(W_{(r)})}\left(\left(\xi^{-1}_{(r)}\otimes1\right)|_{P(W_{(r)})}\right) \right)\\
\cong&\Ind^{\GL(W_{(r+1)})}_{P(W_{(r+1)})}(\xi^{-1}_{(r)}\otimes1).
\end{aligned}$$
This completes the proof of the proposition.
\end{proof}

\subsection{Proof of Theorem \ref{thm2}}\label{subsec pf of thm2}  Now we are ready to prove Theorem \ref{thm2}. 
By Proposition  \ref{key prop1} and Remark \ref{rem key prop1}, we have
$$\begin{aligned}&\Hom_{\GL(V)^{\Delta^{(k-r)}}\cdot P_{(r)}}\left(\Theta(\pi),\sigma\otimes\mu_{(r)}\otimes\psi_{(k-r-1)}\right)\\ 
\cong&\Hom_{\left(\GL(V)^{\Delta^{(k-r)}}\cdot P_{(r)}\right)\times\GL(W_{(k)})}\left(\omega_{(k),(k)},\sigma\otimes\mu_{(r)}\otimes\psi_{(k-r-1)}\otimes\pi\right)\\
\cong&\Hom_{\GL(V)^{\Delta^{(k-r)}}\times P(V_{(r)})\times\GL(W_{(k)})}\left(\left(\omega_{(k),(k)}\right)_{\left(N_{(k-r-1)},\psi_{(k-r-1)}\right)},\sigma\otimes\mu_{(r)}\otimes\pi\right)\\
\cong&\Hom_{\GL(V)^{\Delta^{(k-r)}}\times P(V_{(r)})\times\GL(W_{(k)})}\left(\ind_{G^{\Delta^{(k-r),(k-r-1)}}\cdot H_{(r+1)}}^{\GL(V)^{\Delta^{(k-r)}}\times\GL(W_{(k)})}\left(\omega_{(r+1),(r)}\otimes\psi_{(k-r-2)}^{-1}\right),\sigma\otimes\mu_{(r)}\otimes\pi\right)\\
\cong&\Hom_{\GL(V)^{\Delta^{(k-r)}}\times\GL(W_{(k)})}\left(\ind_{G^{\Delta^{(k-r),(k-r-1)}}\cdot H_{(r+1)}}^{\GL(V)^{\Delta^{(k-r)}}\times\GL(W_{(k)})}\left(\left(\omega_{(r+1),(r)}\right)_{\left(P(V_{(r)}),\mu_{(r)}\right)}\otimes\psi_{(k-r-2)}^{-1}\right),\sigma\otimes\pi\right).
\end{aligned}$$ 
Note that $\mu_{(r)}$ satisfies the condition \eqref{equ char-cond}. Thus Proposition \ref{key prop2} yields
$$\left(\omega_{(r+1),(r)}\right)_{\left(P(V_{(r)}),\mu_{(r)}\right)}\cong\Ind^{\GL(W_{(r+1)})}_{P(W_{(r+1)})}\left(\nu^{-1}_{(r)}\otimes1\right).$$ Therefore
$$\begin{aligned}&\ind_{G^{\Delta^{(k-r),(k-r-1)}}\cdot H_{(r+1)}}^{\GL(V)^{\Delta^{(k-r)}}\times\GL(W_{(k)})}\left(\left(\omega_{(r+1),(r)}\right)_{\left(P(V_{(r)}),\mu_{(r)}\right)}\otimes\psi_{(k-r-2)}^{-1}\right)\\
\cong&\ind_{G^{\Delta^{(k-r),(k-r-1)}}\cdot H_{(r+1)}}^{\GL(V)^{\Delta^{(k-r)}})^\Delta\times\GL(W_{(k)})}\left(\Ind^{\GL(W_{(r+1)})}_{P(W_{(r+1)})}\left(\nu^{-1}_{(r)}\otimes1\right)\otimes\psi_{(k-r-2)}^{-1}\right)\\
\cong&\ind_{G^{\Delta^{(k-r),(k-r-1)}}\cdot Q_{(r+1)}}^{\GL(V)^{\Delta^{(k-r)}}\times\GL(W_{(k)})}\left(\left(\nu^{-1}_{(r)}\otimes1\right)\otimes\psi_{(k-r-2)}^{-1}\right).
\end{aligned}$$
Thus, as the proof of Theorem \ref{thm intro1}, we get
$$\begin{aligned}&\Hom_{\GL(V)^{\Delta^{(k-r)}}\cdot P_{(r)}}\left(\Theta(\pi),\sigma\otimes\mu_{(r)}\otimes\psi_{(k-r-1)}\right)\\
\cong& \Hom_{G^{\Delta^{(k-r),(k-r-1)}}\cdot Q_{(r+1)}}\left(\sigma^\vee\otimes\pi^\vee,\delta^{1/2}_{G^{\Delta^{(k-r),(k-r-1)}}\cdot Q_{(r+1)}}\otimes\left(\nu_{(r)}\otimes1\right)\otimes\psi_{(k-r-2)}\right)\\
\cong& \Hom_{\GL(W)^{\Delta^{(k-r-1)}}\cdot Q_{(r+1)}}\left(\pi^\vee,\left(\chi_{r+1}\otimes\sigma\right)\otimes\left(\delta^{1/2}_{Q_{(r+1)}}\cdot\left(\nu_{(r)}\otimes1\right)\right)\otimes\psi_{(k-r-2)}\right)\\
=& \Hom_{\GL(W)^{\Delta^{(k-r-1)}}\cdot Q_{(r+1)}}\left(\pi^\vee,\left(\chi_{r+1}\otimes\sigma\right)\otimes\mu_{(r+1)}\otimes\psi_{(k-r-2)}\right),
\end{aligned}$$
which proves the first part of the theorem.

If $\Theta(\pi)\cong\pi^\vee$, then $\Theta(\pi^\vee)\cong\pi$. Applying the Theorem \ref{thm intro1-new} as the initial step ($r=0$) and the above isomorphism repeatedly (from $r=1$ to $r=k-2$), 
we deduce 
the second statement of the theorem.

\s{\small Chong Zhang\\
School of Mathematics, Nanjing University,\\
Nanjing 210093, Jiangsu, P. R. China.\\
E-mail address: \texttt{zhangchong@nju.edu.cn}}

\end{document}